\crefname{section}{§}{§§}
\Crefname{section}{§}{§§}
\newtheorem{Def}{Definition}[section]
\newtheorem{Lem}{Lemma}[section]
\title{The joint bidiagonalization method for large GSVD computations
in finite precision\thanks{This
		work was supported in part by
		the National Science Foundation of China (No. 12171273)}}
\author{Zhongxiao Jia\thanks{Department of Mathematical Sciences, Tsinghua
		University, 100084 Beijing, China. (\email{jiazx@tsinghua.edu.cn})} \and
	Haibo Li\thanks{Department of Mathematical Sciences, Tsinghua
	University, 100084 Beijing,  China.
	 (\email{lee12358@163.com})}}
\begin{document}
	\maketitle
	
%\pagewiselinenumbers
%% ------------------------------------------------------------------
%% ABSTRACT
%% ------------------------------------------------------------------
\begin{abstract}
The joint bidiagonalization (JBD) method has been used to compute
some extreme generalized singular values and vectors of a large
regular matrix pair $\{A,L\}$, where we propose
three approaches to compute approximate generalized singular values
and vectors. We make a numerical analysis of the
underlying JBD process and establish relationships between it and
two mathematically equivalent Lanczos bidiagonalizations in finite precision.
Based on the results of numerical
analysis, we investigate the convergence of the approximate
generalized singular values and vectors of $\{A,L\}$. The results show that,
under some mild conditions,
the semiorthogonality of Lanczos type vectors suffices to
deliver approximate generalized singular values with the same accuracy
as the full orthogonality does,
meaning that it is only necessary to seek for efficient semiorthogonalization
strategies for the JBD process. We also establish a sharp bound
for the residual norm of an approximate generalized singular value and corresponding
approximate right generalized singular vectors, which can reliably estimate the residual norm
without explicitly computing the approximate right generalized
singular vectors before the convergence occurs.

\end{abstract}

\begin{keywords}
generalized singular value decomposition, joint bidiagonalization,
Lanczos bidiagonalization, rounding error,
orthogonality level, Ritz value, reorthogonalization, residual norm
\end{keywords}

\begin{AMS}
65F15, 65F20, 65F25, 15A18, 65F50, 65G50
\end{AMS}
%% ------------------------------------------------------------------
%% END HEADER
%% ------------------------------------------------------------------

\section{Introduction}\label{sec1}

In \cite{Zha1996}, Zha presents a joint bidiagonalization (JBD) process
that jointly bidiagonalizes a large sparse or
structured matrix pair $\{A,L\}$ to upper diagonal forms
successively, where $A\in\mathbb{R}^{m\times n}$ and $L\in\mathbb{R}^{p\times n}$.
He exploits the JBD process to compute
a few extreme generalized singular values and vectors of
$\{A,L\}$ \cite{Paige1981, Van1976, Van1985}.
Kilmer \textit{et al.} \cite{Kilmer2007} develop a variant of the JBD
process that jointly reduces $\{A,L\}$ to lower and upper
bidiagonal forms simultaneously. Besides
the computation of a few extreme generalized singular
value decomposition (GSVD) components, this variant is
used to solve large scale linear discrete ill-posed problems with
general-form regularization, where $L$ is the regularization matrix
\cite{Hansen1998,Hansen2010,JiaYang2020,Kilmer2007}.

The JBD process of $\{A,L\}$ is mathematically equivalent to Lanczos
bidiagonalizations \cite{Bjorck1996,Golub2013,Paige1982} of two certain
related matrices, as we will describe next.
Consider the compact QR factorization of the stacked matrix:
\begin{equation}\label{1.1}
\begin{pmatrix}
A \\
L
\end{pmatrix} = QR =
\begin{pmatrix}
Q_{A} \\
Q_{L}
\end{pmatrix}R ,
\end{equation}
where $Q \in \mathbb{R}^{(m+p)\times n}$ is column orthonormal and
$R\in \mathbb{R}^{n\times n}$ is upper triangular, and
$Q_{A}\in\mathbb{R}^{m\times n}$ and $Q_{L}\in\mathbb{R}^{p\times n}$.
If ${\rm rank}((A^T,L^T)^T)=n$ with ${\rm rank}(\cdot)$
the rank of a matrix, we call the pair $\{A,L\}$ is regular;
in this case, $R$ is nonsingular.

Applying the BIDIAG-1 algorithm and the
BIDIAG-2 algorithm in \cite{Paige1982}, which are the lower
and upper Lanczos bidiagonalizations, to $Q_{A}$ and $Q_{L}$,
respectively, we can reduce $Q_{A}$ and $Q_{L}$ to the following
lower and upper bidiagonal matrices:
\begin{equation}\label{1.2}
B_{k}=\begin{pmatrix}
\alpha_{1} & & & \\
\beta_{2} &\alpha_{2} & & \\
&\beta_{3} &\ddots & \\
& &\ddots &\alpha_{k} \\
& & &\beta_{k+1}
\end{pmatrix}\in  \mathbb{R}^{(k+1)\times k},
\widehat{B}_{k}=\begin{pmatrix}
\hat{\alpha}_{1} &\hat{\beta}_{1} & & \\
&\hat{\alpha}_{2} &\ddots & \\
& &\ddots &\hat{\beta}_{k-1} \\
& & &\hat{\alpha}_{k}
\end{pmatrix}\in  \mathbb{R}^{k\times k} .
\end{equation}
The above two algorithms produce the four orthonormal matrices
\begin{equation}\label{1.3}
U_{k+1}=(u_{1},\ldots,u_{k+1}) \in \mathbb{R}^{m\times (k+1)}, \ \
V_{k}=(v_{1},\ldots,v_{k}) \in \mathbb{R}^{n\times k}
\end{equation}
and
\begin{equation}\label{1.4}
 \widehat{U}_{k}=(\hat{u}_{1},\ldots,\hat{u}_{k}) \in \mathbb{R}^{p\times k}, \ \
\widehat{V}_{k}=(\hat{v}_{1},\ldots,\hat{v}_{k}) \in \mathbb{R}^{n\times k}.
\end{equation}

The BIDIAG-1 and BIDIAG-2 algorithms can be related by taking the starting vector
$\hat{v}_{1}= v_{1}$ in BIDIAG-2.
It has been proved in \cite{Kilmer2007,Zha1996} that the
Lanczos vectors $\hat{v}_{i}$ and the elements $\hat{\alpha}_i,\hat{\beta}_{i}$ of
$\widehat{B}_{k}$ have the following relations:
\begin{equation}\label{1.5}
\hat{v}_{i+1} = (-1)^{i}v_{i+1} , \ \
\hat{\beta}_{i} = \alpha_{i+1}\beta_{i+1}/\hat{\alpha}_{i} .
\end{equation}
For $A$ and $L$ large, the explicit QR
factorization \eqref{1.1} is generally impractical
due to the excessive storage and/or computational cost. It
can be avoided by solving a least squares problem with $(A^{T}, L^{T})^{T}$
as the coefficient matrix at each iteration $i$, $i=1,2,\ldots,k$.
Zha \cite{Zha1996} and Kilmer, Hansen and Espanol
\cite{Kilmer2007} propose the JBD process that successively reduces $\{A,L\}$ to
$\{B_{k}, \bar{B}_{k}\}$, where $\bar{B}_k=\widehat{B}_kD_k$
with $D_k=\diag (1,-1,\dots ,(-1)^{k-1})\in\mathbb{R}^{k\times k}$, which
will be described in the next section.
The $k$-step JBD process explicitly computes three orthonormal matrices
$U_{k+1}\in \mathbb{R}^{m\times (k+1)}$,
$\widehat{U}_{k}\in \mathbb{R}^{p\times k}$, and
$\widetilde{V}_{k}\in \mathbb{R}^{(m+n)\times k}$, and the lower and upper
bidiagonal matrices $B_{k}$ and $\bar{B}_{k}$. The matrices
$V_{k}$ and $\widehat{V}_k$ in the BIDIAG-1 and BIDIAG-2 algorithms
are related to $\widetilde{V}_{k}$
by $V_{k}=Q^{T}\widetilde{V}_{k}$ and $\widehat{V}_{k}=V_kD_k$.
Therefore, the JBD process on $\{A, L\}$ is mathematically equivalent
to the joint lower and upper Lanczos bidiagonalizations of $Q_A$
and $Q_L$ when taking $\hat{v}_1=v_1$.

The lower bidiagonal $B_{k}$ is the Ritz--Galerkin projection of $Q_{A}$ on the
left subspace ${\rm span}(U_{k+1})$ and the right subspace ${\rm span}(V_{k})$, while
the upper bidiagonal $\bar{B}_{k}$ is the Ritz--Galerkin projection of
$Q_{L}$ on the left and right subspaces ${\rm span}(\widehat{U}_{k})$ and
${\rm span}(V_{k})$, where ${\rm span}(\cdot)$ denotes
the subspace spanned by the columns of a matrix. Therefore, the extreme
singular values of $Q_A$ or $Q_L$  can be approximated by
those of $B_{k}$ or $\widehat{B}_{k}$, and the extreme
generalized singular values of $\{A,L\}$ can be approximated by
those of $\{B_k,\bar{B}_k\}$ since the generalized singular values of
$\{A,L\}$ are identical to those of $\{Q_A,Q_L\}$ \cite{JiaYang2020,Zha1996}.

Due to the influence of rounding errors, we have
numerically observed that the orthogonality of the three
sets of basis vectors, also called Lanczos vectors conventionally,
computed by the JBD process loses gradually. This is a typical phenomenon
in Lanczos type algorithms, such as the
symmetric Lanczos process \cite{Lanczos1950} and the Lanczos
bidiagonalization process \cite{Larsen1998}. The loss of
orthogonality of Lanczos vectors leads to a delay of
convergence of some extreme eigenvalues
and the appearance of spurious computed Ritz values, i.e., ghost Ritz values,
\cite{Meurant2006,Paige1971,Paige1972,Paige1980}. To fix this deficiency,
several reorthogonalization strategies have been proposed to
maintain some level of orthogonality of the computed
Lanczos vectors in order to ensure the
convergence of the computed Ritz values \cite{Parlett1979,Simon1984a,Simon1984b}.
Particularly, Simon \cite{Simon1984a} proves that the semiorthogonality of
Lanczos vectors suffices to guarantee that the computed
Ritz values have the same accuracy as the full orthogonality does and
avoid spurious computed Ritz values.
The above results on the symmetric Lanczos process have been
adapted by Larsen \cite{Larsen1998} to Lanczos bidiagonalization,
based on which he proposes an efficient partial reorthogonalization strategy.
Later on, Simon and Zha in \cite{Simon2000} propose a one-sided
reorthogonalization strategy on the computed right Lanczos vectors.
Barlow \cite{Barlow2013} makes
a backward error analysis of the one-sided reorthogonalization scheme
and proves that Lanczos bidiagonalization
applied to a matrix $C$ in finite precision produces Krylov
subspaces generated by a nearby matrix $C+E$, where $E$ is an error matrix
depending on the orthogonality level of the computed
right Lanczos vectors.

Denote the unit roundoff by $\epsilon$.
In the presence of rounding errors, among many others, a central concern
is whether or not the JBD process for computing $U_{k+1}$, $V_{k}$
and $B_{k}$ is equivalent to the standard
lower Lanczos bidiagonalization of $Q_{A}$
with the rounding error $O(\epsilon)$ and whether or not the process for
computing $\widehat{U}_{k}$, $\widehat{V}_{k}$ and $\widehat{B}_{k}$ is
equivalent to the upper Lanczos bidiagonalization of $Q_{L}$
with the rounding error $O(\epsilon)$. There has been yet no
result on the finite precision behavior of the JBD process. In this paper,
we will focus on it and, based on some underlying round-off error
models and results,
make a numerical analysis of the JBD process. We will
derive a number of properties of the JBD process
in finite precision.
Our contributions mainly consist of the following three parts.

First, we will show that the equivalence of the JBD process and
standard lower and upper Lanczos bidiagonalizations
does not hold in finite precision unconditionally.
That is, the finite precision forms resulting from the JBD process
may be no longer the corresponding ones of standard
Lanczos bidiagonalizations if there are no additional conditions.
We will investigate what a role rounding errors play
in the loss of this equivalence and in what way rounding errors are amplified.

Second, in finite precision, we will show that the orthogonality
levels of $U_{k+1}$, $\widetilde{V}_{k}$ and $\widehat{U}_{k}$
are closely related and those of $\widehat{V}_k$ and $V_k$ interact too.
In particular, we derive an
upper bound for the orthogonality level of $\widehat{U}_{k}$,
which is shown to be controlled by not only the orthogonality levels of
$\widetilde{V}_{k}$ and ${U}_{k+1}$ but also a gradually
growing quantity $\|\widehat{B}_{k}^{-1}\|$. The result indicates
that the orthogonality level of $\widehat{U}_{k}$ is similar to
those of $U_{k+1}$ and $\widetilde{V}_{k}$, provided that
$\widehat{B}_{k}$ is not ill conditioned.
Therefore, when designing a reorthogonalization
strategy for the JBD process, one reorthogonalizes only $u_{i}$
and $\tilde{v}_{i}$ but $\hat{u}_{i}$, which can save considerable
reorthogonalization work.

Third, we shall show how to make use of the JBD process to
compute extreme GSVD components of $\{A,L\}$, leading to
the JBD method for the large GSVD computation. Precisely,
we will propose three approaches to compute
approximate generalized singular values and approximate right
generalized singular vectors by exploiting the SVDs of $B_k$
and $\bar{B}_k$ and the GSVD of $\{B_k,\bar{B}_k\}$, respectively;
we will present two approaches to compute approximate left
generalized singular vectors of $A$ and $L$ via either
the left singular vectors of $B_k$ and $\bar{B}_k$
simultaneously or the left generalized singular vectors of
$\{B_k,\bar{B}_k\}$. We will investigate the convergence
of the approximate generalized singular values that are computed by
the singular values of $B_{k}$ or $\bar{B}_{k}$. Similarly to
Lanczos bidiagonalization, the loss of orthogonality of
Lanczos type vectors computed by the JBD process leads to a delay of the
convergence of the computed Ritz values and the appearance of
spurious copies. We show that, under the assumptions
that $\|B_k^{-1}\|$ and $\|\widehat{B}_k^{-1}\|$ are
modest uniformly with $k$, the semiorthogonality of Lanczos type
vectors suffices to avoid spurious copies and guarantees that
the approximate generalized singular values have the same accuracy
as the full orthogonality does. Here the semiorthogonality
means that the absolute value of inner product of
two unit length vectors is at the level of $O(\epsilon^{1/2})$,
in contrast to the full orthogonality level $O(\epsilon)$. Therefore,
the semiorthogonality of Lanczos vectors suffices for the JBD method.
In the meantime,
we study the residual norm of an approximate generalized singular value and
approximate right generalized singular vector,
whose size is used to design a stopping tolerance for the JBD method.
In finite precision, we derive an upper bound
for the residual norm, and show that this upper bound can replace
the residual norm to design a reliable stopping criterion
without explicitly computing approximate right generalized singular vectors
before the convergence occurs. We only compute
the approximate right generalized singular vectors
by solving certain consistent least squares problems with the coefficient matrix
$(A^T,L^T)^T$ at the convergence rather than doing so at each iteration.

The paper is organized as follows. In Section \ref{sec2}, we review
the GSVD of $\{A,L\}$ and describe the JBD process
in exact arithmetic. In Section \ref{sec3},
we make a numerical analysis of the JBD process in finite
precision. We establish relationships between the JBD
process and two lower and upper Lanczos bidiagonalizations, and
investigate interactions of orthogonality levels of the computed
Lanczos type vectors. In Section \ref{sec4}, we describe the JBD
method for computing a number of extreme generalized singular values and
vectors of $\{A, L\}$, and discuss the convergence
and stopping criteria. In Section \ref{sec5},
we report numerical experiments to confirm our results. Finally, we
conclude the paper with some remarks and future work in Section \ref{sec6}.

Throughout the paper, we denote by $I_k$ the identity matrix of order $k$,
by $0_k$ and $0_{k\times l}$ the $k$-dimensional zero vector and
the $k\times l$ zero matrix, respectively. The transpose of a matrix
$C$ is denoted by $C^{T}$, and $\|\cdot \|$ is the 2-norm of a matrix.

%%%%%%%%%%%%%%%%%%%%%%%%%%%%%%%%%%%%%%%%%%%%%%%%%%%%%%%%%%%%%%%%%%%%%%%%%%%
\section{GSVD and the JBD process}\label{sec2}

We describe the GSVD of $\{A,L\}$ and the JBD process with some of its
basic properties. Let the compact QR
factorization of $(A^{T}, L^{T})^{T}$ be defined as \eqref{1.1} and
\begin{equation}\label{2.1}
Q_{A} = P_{A}C_{A}W^{T} , \ \  Q_{L} = P_{L}S_{L}W^{T}
\end{equation}
be the CS decomposition of the matrix pair
$\{Q_{A}, Q_{L} \}$ \cite[\S 2.5.4]{Golub2013}, where $P_{A}\in \mathbb{R}^{m\times m}$,
$P_{L}\in \mathbb{R}^{p\times p}$ and $W\in\mathbb{R}^{n\times n}$
are orthogonal matrices, and $C_{A}\in\mathbb{R}^{m\times n}$ and
$S_{L}\in\mathbb{R}^{p\times n}$ are diagonal matrices (not necessarily square) satisfying
$C_{A}^{T}C_{A}+S_{L}^{T}S_{L}=I_{n}$.

Suppose that $\rank((A^{T}, L^{T})^{T})=r$.
It is shown in \cite{Paige1981} that $C_{A}$ and $S_{L}$ can be written as
$$C_{A} =
\bordermatrix*[()]{%
	\Sigma_{A}, & 0 & m \cr
	r &   n-r   \cr
} \ , \ \ \ \
S_{L} = \bordermatrix*[()]{%
	\Sigma_{L}, & 0 & p \cr
	r &   n-r   \cr
} ,$$
where
$$\Sigma_{A} =
\bordermatrix*[()]{%
	I_{q}  &  &  & q \cr
	&  C_{l}  &  & l \cr
	&  & O  & m-q-l \cr
	q & l & r-q-l
} , \ \ \ \
\Sigma_{L} =
\bordermatrix*[()]{%
	O  &  &  & p-r+q \cr
	&  S_{l}  &  & l \cr
	&  & I_{t}  & r-q-l \cr
	q & l & r-q-l
} .$$
Write $C_{l}=\diag(c_{q+1}, \dots, c_{q+l})$, \ $c_{q+1}\geq \cdots
\geq c_{q+l}>0$ and $S_{l}=\diag(s_{q+1}, \dots, s_{q+l})$, \ $0<s_{q+1}\leq
\cdots \leq s_{q+l}$. Then $c_{i}^{2}+s_{i}^{2}=1, \ i=q+1, \dots, q+l$,
and the generalized singular values of $\{A, L\}$ are
$$\underbrace{\infty, \dots, \infty}_{q}, \ \
\underbrace{c_{q+1}/s_{q+1}, \dots, c_{q+l}/s_{q+l}}_{l}, \ \
\underbrace{0, \dots, 0}_{t},
$$
where $t=r-q-l$.

To ease the presentation, in the sequel, we always assume that
$\{A,L\}$ is regular. Then $R$ in \eqref{1.1} is nonsingular and
the GSVD of $\{A, L\}$ is
\begin{equation}\label{2.2}
A = P_{A}C_{A}X^{-1} , \ \  L = P_{L}S_{L}X^{-1}
\end{equation}
with $X=R^{-1}W\in\mathbb{R}^{n\times n}$.
Let $X=(x_1,\ldots,x_n)$, $P_A=(p_1^A,\ldots,p_m^A)$ and
$P_L=(p_1^L,\ldots,p_p^L)$. We can write the GSVD
\eqref{2.2} in the vector form:
\begin{equation}\label{gsvdv}
  \left\{
  \begin{aligned}
  Ax_i&=c_i p_i^A,\\
  Lx_i&=s_i p_i^L,\\
  s_i A^Tp_i^A&=c_iL^Tp_i^L,
  \end{aligned}
  \right.
  \qquad
  i=1,\dots,n,
\end{equation}
where the $i$-th large generalized singular value
of $\{A,L\}$ is $c_{i}/s_{i}$, and the $i$-th corresponding
generalized singular vectors are $x_{i}$, $p^{A}_{i}$ and $p_{i}^{L}$,
respectively.
We call $x_{i}$ the right generalized singular vector,
$p^{A}_{i}$ and $p_{i}^{L}$ the left generalized singular
vectors corresponding to $c_{i}/s_{i}$.
Since $c_{i}/s_{i}=\infty$ and $c_{i}/s_{i}=0$ when $s_{i}=0$
and $s_{i}=1$, it is more convenient to use
pair $\{c_{i}, s_{i}\}$ to denote $c_{i}/s_{i}$. We also note that
each $x_i$ satisfies the normalization $x_i^T(A^TA+L^TL)x_i=1$.

We describe the JBD process \cite{Kilmer2007} as Algorithm \ref{alg1},
which, in exact arithmetic, corresponds to the lower and upper
Lanczos bidiagonalization of $Q_A$ and $Q_L$, respectively:
\begin{align}
&Q_AV_k=U_{k+1}B_k,\ \ Q_A^TU_{k+1}=V_kB_k^T+\alpha_{k+1}v_{k+1}e_{k+1}^T,\label{2.10}\\
&Q_L\widehat{V}_k=\widehat{U}_k\widehat{B}_k,\ \ Q_L^T\widehat{U}_k
=\widehat{V}_k\widehat{B}_k^T+\hat{\beta}_k\hat{v}_{k+1}e_k^T  \label{2.11}
\end{align}
with $\hat{v}_1=v_1$, where $e_{k}$ is the last column of $I_k$,
which are $k$-step lower and upper Lanczos bidiagonalization
processes of $Q_A$ and $Q_L$, respectively.

\begin{algorithm}[htb]
	\caption{The $k$-step JBD process}
	\begin{algorithmic}[1]\label{alg1}
		\STATE {Choose a nonzero starting vector $b \in \mathbb{R}^{m}$,
			and let $\beta_{1}u_{1}=b,\ \beta_{1}=\| b\|$ }
		\STATE {$\alpha_{1}\tilde{v}_{1}=QQ^{T}\begin{pmatrix}
			u_{1} \\
			0_{p}
			\end{pmatrix} $}
		\STATE { $\hat{\alpha}_{1}\hat{u}_{1}=\tilde{v}_{1}(m+1:m+p) $}
		\FOR{$i=1,2,\ldots,k,$}
		\STATE $\beta_{i+1}u_{i+1}=\tilde{v}_{i}(1:m)-\alpha_{i}u_{i} $
		\STATE $ \alpha_{i+1}\tilde{v}_{i+1}=
		QQ^{T}\begin{pmatrix}
		u_{i+1} \\
		0_{p}
		\end{pmatrix}-\beta_{i+1}\tilde{v}_{i} $
		\STATE $\hat{\beta}_{i}=(\alpha_{i+1}\beta_{i+1})/\hat{\alpha}_{i} $
		\STATE $\hat{\alpha}_{i+1}\hat{u}_{i+1}=
		(-1)^{i}\tilde{v}_{i+1}(m+1:m+p)-\hat{\beta}_{i}\hat{u}_{i} $
		\ENDFOR
	\end{algorithmic}
\end{algorithm}

At each iteration $i=1,2,\ldots,k$, Algorithm~\ref{alg1} needs to compute $QQ^{T} \begin{pmatrix}
u_i \\ 0_p
\end{pmatrix}$.
For $A$ and $L$ large, however, the compact QR
factorization \eqref{1.1} of $(A^{T}, L^{T})^{T}$ is generally impractical,
that is, both $Q$ and $R$ are not available. Let $\tilde{u}_i=\begin{pmatrix}
u_i \\ 0_p
\end{pmatrix}$. Since $QQ^T\tilde{u}_i$ is nothing but the orthogonal projection
of $\tilde{u}_i$ onto the column space of $(A^{T}, L^{T})^{T}$, we have $QQ^T\tilde{u}_i=\begin{pmatrix}
A \\ L
\end{pmatrix}\tilde{x}_i$, where
\begin{equation}\label{2.3}
\tilde{x}_i=\arg\min_{\tilde{x}\in \mathbb{R}^n}
\left\|\begin{pmatrix}
A \\ L
\end{pmatrix}
\tilde{x}-\tilde{u}_i\right\|.
\end{equation}
This large scale least squares problem can be solved by an
iterative solver, e.g., the most commonly used LSQR algorithm \cite{Paige1982}.

In exact arithmetic, the $k$-step JBD process produces the two
bidiagonal matrices $B_{k}$, $\widehat{B}_{k}$ and three
orthonormal matrices $U_{k+1}$, $\widehat{U}_{k}$ in \eqref{1.3}--\eqref{1.4} and
\begin{align}\label{2.4}
\widetilde{V}_{k}=(\tilde{v}_{1},\ldots,\tilde{v}_{k}) \in \mathbb{R}^{(m+p)\times k},
\end{align}
where $\tilde{v}_{i}=Qv_{i}$
with $v_i$ the $i$-th column of $V_k$ in \eqref{1.3}, i.e., $v_i=Q^T\tilde{v}_i$.
It can be written as
\begin{align}
& (I_{m},0_{m\times p})\widetilde{V}_{k}=U_{k+1}B_{k} \label{2.5} , \\
& QQ^{T}
\begin{pmatrix}
U_{k+1} \\
0_{p\times (k+1)}
\end{pmatrix}
=\widetilde{V}_{k}B_{k}^{T}+\alpha_{k+1}\tilde{v}_{k+1}e_{k+1}^{T} \label{2.6}  ,  \\
& (0_{p\times m},I_{p})\widetilde{V}_{k}D_k=\widehat{U}_{k}\widehat{B}_{k} \label{2.7} ,
\end{align}
where $D_k=\diag(1,-1,\dots ,(-1)^{k-1})\in\mathbb{R}^{k\times k}$,
and $e_{k+1}$ is the last column of $I_{k+1}$.

It is shown \cite{Kilmer2007} that, in exact arithmetic, the $k$-step JBD process satisfies
\begin{equation}\label{2.8}
AZ_{k} = U_{k+1}B_{k} , \ \ LZ_{k}=\widehat{U}_{k}\bar{B}_{k} ,
\end{equation}
where $Z_{k}=R^{-1}V_{k}=(z_{1},\dots,z_{k})$ and $\bar{B}_{k}=\widehat{B}_{k}D_k$,
%We can use the SVD of $B_{k}$ and $\widehat{B}_{k}$ to approximate
%the generalized singular values and vectors of $\{A, L\}$.
%We will make a detailed
%investigation on the GSVD computation of $\{A, L\}$ in
%Section \ref{sec4}. It is shown in \cite{Kilmer2007} that
and
\begin{equation}\label{2.9}
B_{k}^{T}B_{k}+\bar{B}_k^T\bar{B}_k=I_{k}.
\end{equation}
Therefore, the singular values of $B_{k}$
are determined by those of $\bar{B}_{k}$, i.e.,
$\widehat{B}_k$, and vice versa. As a result, if some extreme
generalized singular values of $\{A, L\}$ are of interest,
one can use the extreme generalized singular values of $\{B_k,\bar{B}_k\}$
to approximate them by either computing the generalized
singular values of the small pairs or
only computing the singular values of $B_{k}$ or
$\bar{B}_{k}$.

In finite precision, it is well known that the Lanczos vectors computed by
Lanczos bidiagonalization gradually lose their mutual orthogonality
as the iteration number $k$ increases \cite{Larsen1998}.
Following \cite{Larsen1998,Simon2000}, we define the
orthogonality level of a set of vectors as follows.

\begin{Def}\label{def2.1}
For a rectangular matrix $W_{k}=(w_{1}, \dots, w_{k})\in \mathbb{R}^{r\times k}$
with $\lVert w_{j}\lVert=1$, $j=1,\dots,k$, we call
$\xi_{ij}^{w}=|w_{i}^{T}w_{j}|$ the orthogonality level among $w_{i}$
and $w_{j}$. The orthogonality level of $\{w_{1}, \dots, w_{k}\}$ or $W_{k}$
is measured by one of
	\begin{align}
	& \xi(W_{k})=\max_{1\leq i\neq j \leq k}\xi_{ij}^{w} , \label{2.12} \\
	& \eta(W_{k}) = \| I_{k}-W_{k}^{T}W_{k}\| . \label{2.13}
	\end{align}	
\end{Def}

Notice that $\xi(W_{k})\leq\eta(W_{k})\leq k\xi(W_{k})$. The above two quantities
can be used interchangeably to measure the orthogonality level of
Lanczos vectors. Let $\sigma_{i}(\cdot)$ and $\lambda_{i}(\cdot)$ be
the $i$-th largest singular value and eigenvalue of a symmetric
matrix respectively. Then
$$\sigma_{1}^{2}(W_{k})
= \lambda_{1}(W_{k}^{T}W_{k})
= 1 + \lambda_{1}(W_{k}^{T}W_{k}-I_{k})
\leq 1 + \lVert I_{k}-W_{k}^{T}W_{k}\lVert ,$$
which leads to
\begin{equation}\label{2.14}
\|W_{k}\| \leq \sqrt{1 + \eta(W_{k})} .
\end{equation}

For Lanczos bidiagonalization, the loss of orthogonality of the Lanczos
vectors will lead to appearance of spurious Ritz values and a delay
of the convergence of Ritz values \cite{Larsen1998}. Therefore,
reorthogonalization strategies are necessary to maintain necessary orthogonality
level in order to preserve convergence of Ritz values and avoid
spurious copies;
see \cite{Barlow2013,Larsen1998,Parlett1979,Simon1984a,Simon1984b,Simon2000}
for a few types of reorthogonalization strategies and related analysis.

%%%%%%%%%%%%%%%%%%%%%%%%%%%%%%%%%%%%%%%%%%%%%%%%%%%%%%%%%%%%%%%%%%%%%%%%
\section{The JBD process in finite precision}\label{sec3}

When it is carried out in finite precision, due to
the influence of rounding errors, the behavior of the JBD process
will deviate from that in exact arithmetic.
First, the JBD process of $\{A, L\}$ is not equivalent to the
combination of the two Lanczos bidiagonalizations any longer.
Second, the three matrices $U_{k+1}$, $\widetilde{V}_{k}$ and $\widehat{U}_{k}$
lose their numerical orthogonality gradually as $k$ increases. In this paper,
we do not consider the solution accuracy of the inner least
squares problem \eqref{2.3} at each iteration, though it has an
important influence on the accuracy and efficiency of the algorithm.
This issue is complicated, and we will study it in our future work.
In the following analysis, we always assume that \eqref{2.3} is solved
accurately, i.e.,
$\begin{pmatrix}
A \\ L
\end{pmatrix}\tilde{x}_i=QQ^{T} \begin{pmatrix}
u_i \\ 0_p
\end{pmatrix}$.

First of all, we state a set of basics and assumptions on
the behavior of the rounding
errors occurring in the JBD process. They are adapted from
the symmetric Lanczos process and Lanczos bidiagonalization,
constitute a model for the actual computation, and grasp essential features but
discard those irrelevant or negligible ones. From now on, without confusion,
we use the same notation as before to denote the computed ones in
finite precision. In this case,
relations \eqref{2.5}--\eqref{2.7} add
rounding error terms (cf. \cite[\S 13.4]{Parlett1980}) and become
\begin{align}
& (I_{m},0_{m\times p})\widetilde{V}_{k}=U_{k+1}B_{k} + \widetilde{F}_{k}, \label{3.1}  \\
& QQ^{T}
\begin{pmatrix}
U_{k+1} \\
0_{p\times (k+1)}
\end{pmatrix}=\widetilde{V}_{k}B_{k}^{T}+\alpha_{k+1}\tilde{v}_{k+1}
e_{k+1}^{T}+\widetilde{G}_{k+1}, \label{3.2} \\
& (0_{p\times m},I_{p})\widetilde{V}_{k}D_k=\widehat{U}_{k}
\widehat{B}_{k}+\bar{F}_{k} , \label{3.3}
\end{align}
where the rounding error matrices $ \widetilde{F}_{k}=(\tilde{f}_{1}, \dots, \tilde{f}_{k}),
\ \widetilde{G}_{k+1} = (\tilde{g}_{1}, \dots, \tilde{g}_{k+1})$
and $\bar{F}_{k}=(\bar{f}_{1}, \dots, \bar{f}_{k})$ satisfy
\begin{equation}\label{errorm}
\|\widetilde{F}_{k}\|, \|\widetilde{G}_{k+1}\|, \|\bar{F}_{k}\|
=O(\epsilon).
\end{equation}
Second, the following local orthogonality of $u_{i}$ and $\hat{u}_{i}$ holds
\cite{Paige1976,Simon1984a}:
\begin{align}
& \beta_{i+1}|u_{i+1}^{T}u_{i}| = O(c_{1}(m,n)\epsilon) , \label{3.4} \\
& \hat{\alpha}_{i+1}|\hat{u}_{i+1}^{T}\hat{u}_{i}| = O(c_{2}(p,n)\epsilon) ,\label{3.5}
\end{align}
where $c_{1}(m,n)$ and $c_{2}(p,n)$ are modest constants depending
on $m$, $n$ and $p$. Third, we assume that
\begin{equation}\label{3.6}
neither \ \ \beta_{i+1} \ nor \ \hat{\alpha}_{i+1}\  ever \  becomes \  negligible ,
\end{equation}
which is almost always true in practice. In case either
$\beta_{i+1}$ or $\hat{\alpha}_{i+1}$
becomes negligible, the JBD process
should be terminated  since it has found either acceptable approximate
right invariant generalized singular subspace and left
generalized singular subspace for $A$ or acceptable approximate
right invariant generalized singular subspace and left
generalized singular subspace for $L$, as can be justified from
\eqref{2.10}, \eqref{2.11} and  \eqref{2.8}. This can also be
seen from the later Theorem~\ref{thm4.2}
and $\hat{\alpha}_i\hat{\beta}_i=\alpha_{i+1}\beta_{i+1}$ in
step 7 of Algorithm~\ref{alg1}.
Finally, for simplicity, in our numerical analysis, we
always assume that the computed Lanczos type vectors are of unit length.

%%%%%%%%%%%%%%%%%%%%%%%%%%%%%%%%%%%%%%%%%%%%%%%%%%%%%%%%%%%%%%%%%%%%%%%%
\subsection{Relationships between the JBD process and Lanczos bidiagonalization
in finite precision}

We show that, in finite precision, the JBD process of
$\{A, L\}$ is no longer equivalent to the combination of the lower and upper
Lanczos bidiagonalizations of $Q_{A}$ and $Q_{L}$. To this end, we first
present the following lemma.

\begin{Lem}\label{lem3.1}
Let $v_{i}=Q^{T}\tilde{v}_{i}$, $V_{k}=(v_{1},\dots,v_{k})$
and $\underline{B}_{k}=\begin{pmatrix}
B_{k-1}^{T} \\
\alpha_{k}e_{k}^{T}
\end{pmatrix}\in \mathbb{R}^{k\times k}$. Then
\begin{equation}\label{3.7}
\lVert \widetilde{V}_{k} - QV_{k} \lVert \leq \lVert \widetilde{G}_{k}
\underline{B}_{k}^{-1}\lVert
= O(\lVert \underline{B}_{k}^{-1}\lVert\epsilon)
\end{equation}	
with $\widetilde{G}_{k}$ defined in \eqref{3.2}.
\end{Lem}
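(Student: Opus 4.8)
The plan is to read off a clean matrix identity from \eqref{3.2}, invert the bidiagonal factor appearing there, and then exploit that $QQ^{T}$ is an orthogonal projector.

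\textbf{Step 1 (restriction of \eqref{3.2}).} First I would take the first $k$ columns of both sides of \eqref{3.2}. Writing $\tilde{U}_k=\begin{pmatrix} U_k \\ 0_{p\times k}\end{pmatrix}$, the left-hand side becomes $QQ^{T}\tilde{U}_k$. On the right-hand side, the term $\alpha_{k+1}\tilde{v}_{k+1}e_{k+1}^{T}$ has all its mass in column $k+1$, so it disappears; the first $k$ columns of $B_k^{T}$ form exactly the $k\times k$ leading block of $B_k^{T}$, which, since $B_k$ is lower bidiagonal, is precisely the upper bidiagonal matrix $\underline{B}_k$; and the first $k$ columns of $\widetilde{G}_{k+1}$ are $\widetilde{G}_k:=(\tilde{g}_1,\dots,\tilde{g}_k)$. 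Hence $QQ^{T}\tilde{U}_k=\widetilde{V}_k\underline{B}_k+\widetilde{G}_k$.

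\textbf{Step 2 (invert and project).} By assumption \eqref{3.6} no $\alpha_i$ is negligible, so the upper bidiagonal matrix $\underline{B}_k$, whose diagonal entries are $\alpha_1,\dots,\alpha_k$, is invertible; multiplying on the right by $\underline{B}_k^{-1}$ gives $\widetilde{V}_k=QQ^{T}\tilde{U}_k\underline{B}_k^{-1}-\widetilde{G}_k\underline{B}_k^{-1}$. Now I would apply $QQ^{T}$ to both sides. Since $Q$ has orthonormal columns, $Q^{T}Q=I_n$, so $QQ^{T}$ is idempotent and $QQ^{T}\bigl(QQ^{T}\tilde{U}_k\bigr)=QQ^{T}\tilde{U}_k$. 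Because $V_k=Q^{T}\widetilde{V}_k$ by definition, this yields $QV_k=QQ^{T}\widetilde{V}_k=QQ^{T}\tilde{U}_k\underline{B}_k^{-1}-QQ^{T}\widetilde{G}_k\underline{B}_k^{-1}$.

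\textbf{Step 3 (subtract and bound).} Subtracting the last two displays, the $QQ^{T}\tilde{U}_k\underline{B}_k^{-1}$ terms cancel, leaving $\widetilde{V}_k-QV_k=(QQ^{T}-I)\widetilde{G}_k\underline{B}_k^{-1}$. Since $I-QQ^{T}$ is an orthogonal projector, $\|I-QQ^{T}\|\le 1$, so $\|\widetilde{V}_k-QV_k\|\le\|\widetilde{G}_k\underline{B}_k^{-1}\|\le\|\widetilde{G}_k\|\,\|\underline{B}_k^{-1}\|$; and $\|\widetilde{G}_k\|\le\|\widetilde{G}_{k+1}\|=O(\epsilon)$ by the assumption stated just after \eqref{3.3}. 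This gives $\|\widetilde{V}_k-QV_k\|=O(\|\underline{B}_k^{-1}\|\epsilon)$, as claimed. The computations are all routine; the only point needing care is the column bookkeeping in Step 1 — making sure the extra column $\alpha_{k+1}\tilde{v}_{k+1}e_{k+1}^{T}$ is discarded and that the leading block of $B_k^{T}$ is genuinely $\underline{B}_k$ — together with the observation that $\underline{B}_k$ is invertible precisely because of assumption \eqref{3.6}; no real obstacle is anticipated.
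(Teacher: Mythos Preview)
Your proof is correct and follows essentially the same route as the paper: restrict \eqref{3.2} to its first $k$ columns to get $QQ^{T}\tilde U_k=\widetilde V_k\underline B_k+\widetilde G_k$, invert $\underline B_k$, apply the projector $QQ^{T}$, and subtract to obtain $\widetilde V_k-QV_k=(QQ^{T}-I)\widetilde G_k\underline B_k^{-1}$. The paper merely rewrites $QQ^{T}\tilde U_k$ as $ZX_k$ with $Z=\begin{pmatrix}A\\L\end{pmatrix}$ and $X_k=Z^{\dag}\tilde U_k$, which is the same object since $ZZ^{\dag}=QQ^{T}$; your version avoids that detour and is, if anything, a little cleaner.
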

\begin{proof}
Write the matrix $C=(A^T,L^T)^T$. Then
$$
QQ^{T}=CC^{\dag}, \ QQ^TC=C, \ QQ^{T}
\begin{pmatrix}
U_{k} \\
0_{p\times k}
\end{pmatrix}=CX_{k},
$$
where $``\dag"$ denotes the Moore--Penrose
inverse of a matrix and $X_{k}=C^{\dag}\begin{pmatrix}
U_{k} \\
0_{p\times k}
\end{pmatrix}$.
From \eqref{3.2} and $V_k=Q^T\widetilde{V}_k$,
we have $ CX_{k} = \widetilde{V}_{k}\underline{B}_{k} +
\widetilde{G}_{k}$, leading to
$\widetilde{V}_{k}=CX_{k}\underline{B}_{k}^{-1}-
\widetilde{G}_{k}\underline{B}_{k}^{-1}$.
Therefore, we obtain
\begin{eqnarray}
\widetilde{V}_{k} - QV_{k} &=& \widetilde{V}_{k}-QQ^{T}\widetilde{V}_k
=\widetilde{V}_{k}-QQ^{T}(CX_{k}\underline{B}_{k}^{-1}-
\widetilde{G}_{k}\underline{B}_{k}^{-1})\nonumber\\
&=&\widetilde{V}_{k}-CX_k\underline{B}_{k}^{-1}+
QQ^T\widetilde{G}_{k}\underline{B}_{k}^{-1}=-\widetilde{G}_k\underline{B}_k^{-1}+
QQ^T\widetilde{G}_{k}\underline{B}_{k}^{-1}
\nonumber\\
&=& (QQ^{T}-I_{m+p})\widetilde{G}_{k}\underline{B}_{k}^{-1}.\label{3.8}
\end{eqnarray}
Taking norms in both sides proves \eqref{3.7}.
\end{proof}

This lemma shows that $\widetilde{V}_{k}$ deviates from
$QV_k$ with the error $O(\lVert \underline{B}_{k}^{-1}\lVert\epsilon)$ and
$\widetilde{V}_{k}=QV_{k}$ in exact arithmetic holds no longer. Using \eqref{3.7},
we can rewrite \eqref{3.1} as
\begin{equation}\label{new}
(I_{m},0_{m\times p})Q{V}_{k}
=U_{k+1}B_{k}+ F_{k}
\end{equation}
where
\begin{equation}
F_{k}=\widetilde{F}_{k}-(I_{m},0_{m\times p})(\widetilde{V}_{k} - QV_{k}).\label{3.9}
\end{equation}
Then from \eqref{errorm} and \eqref{3.7}
we have $\|F_{k}\|=O(\lVert \underline{B}_{k}^{-1}\lVert\epsilon)$.
Premultiplying \eqref{3.2} by $Q^{T}$ and exploiting
$(I_{m},0_{m\times p})Q{V}_{k}=Q_AV_k$ straightforwardly yields
the following result,
which is the corresponding lower Lanczos bidiagonalization of
$Q_A$ in finite precision resulting from the JBD process.

\begin{theorem} \label{thm3.1}
 	Suppose that the inner least squares problem \eqref{2.3} is solved
 accurately. In finite precision, we have
	\begin{align}
	& Q_{A}V_{k} = U_{k+1}B_{k}+ F_{k} , \label{3.10} \\
	& Q_{A}^{T}U_{k+1}=V_{k}B_{k}^{T}+\alpha_{k+1}v_{k+1}e_{k+1}^{T}+G_{k+1} , \label{3.11}
	\end{align}
	where $G_{k+1}=Q^{T}\widetilde{G}_{k+1}$ with $\widetilde{G}_{k+1}$
in \eqref{3.2}
and $\lVert F_{k}\lVert= O(\lVert \underline{B}_{k}^{-1}\lVert\epsilon)$,
$\lVert G_{k+1} \lVert = O(\epsilon)$.
\end{theorem}

This theorem indicates that the error term $F_{k}$ is amplified gradually
once $\lVert \underline{B}_{k}^{-1}\lVert$ grows with $k$. When $Q_A$
is rectangular, i.e., $m>n$, theoretically we cannot control the size of $\|\underline{B}_{k}^{-1}\|$,
as shown now: from the second relation in \eqref{2.10}, we obtain
$$
U_k^TQ_AQ_A^TU_k=\underline{B}_{k}^T\underline{B}_{k},
$$
indicating that the eigenvalues of $\underline{B}_{k}^T\underline{B}_{k}$,
i.e., the squares of the singular values of $\underline{B}_{k}$,
are the Ritz values of the singular matrix $Q_AQ_A^T$ with respect to
${\rm span}(U_k)$ and lie between the largest and smallest eigenvalues
of the {\em singular} matrix $Q_AQ_A^T$. Notice
that ${\rm span}(U_k)$ is the Krylov subspace generated by
$u_1,Q_AQ_A^Tu_1,\ldots,(Q_AQ_A^T)^{k-1}u_1$. Then
that the smallest eigenvalue of $\underline{B}_{k}^T\underline{B}_{k}$
converges to the zero eigenvalue of $Q_AQ_A^T$ as $k$ increases,
so that $\|\underline{B}_{k}^{-1}\|$ may become uncontrollably large;
on the other hand, for $Q_A$ flat or square, i.e., $m\leq n$,
and having full row rank, however, such a phenomenon definitively
cannot occur, and the smallest eigenvalue
of $\underline{B}_{k}^T\underline{B}_{k}$
is bounded from below by the smallest positive one of $Q_AQ_A^T$. In this case,
$\|\underline{B}_{k}^{-1}\|$ is always uniformly bounded but possibly large
when $Q_A$ is ill conditioned. We refer the reader to \cite{jia2020b} on
a detailed analysis on $\underline{B}_k$ and its singular values,
which is closely related to and play a critical role in
the conjugate gradient minimal error (CGME) method
for solving least squares problems and linear discrete ill-posed problems.
As a result, in finite precision,
the the JBD process for computing $U_{k+1}$, $V_k$ and $B_k$ may be
no longer equivalent to
the lower Lanczos bidiagonalization of $Q_{A}$, where the rounding error
term in the place of $F_k$ is
$O(\|Q_{A}\|\epsilon)=O(\epsilon)$ in size.

Similarly, from \eqref{1.5} and the first relation in \eqref{2.11} we have
$$
V_k^TQ_L^TQ_L V_k=\bar{B}_k^T\bar{B}_k.
$$
Since ${\rm span}(V_k)$ is the Krylov subspace generated by
$v_1,Q_L^TQ_Lv_1,\ldots,(Q_L^TQ_L)^{k-1}v_1$,
we can make a similar analysis to the above. Specifically,
if $Q_L$ is rectangular or square and of full column rank, then
the smallest singular value
of $\bar{B}_k$ converges to the smallest one of $Q_L$ from above
as $k$ increases, and it is bounded from below by it, meaning
that $\|\bar{B}_k^{-1}\|$ is controllable; if $Q_L$ is flat, then
the smallest singular value of $\bar{B}_k$ converges to zero as
$k$ increases, causing that $\|\bar{B}_k^{-1}\|$ cannot be controlled
and become large as $k$ increases.

Note that the orders of $Q_A$ and $Q_L$ are the same as those of
$A$ and $L$, respectively.
The above analysis and assertions suggest us to first check
the orders of $A$ and $L$ and then perform the JBD process on either
$\{A,L\}$ or $\{L,A\}$ when attempting to ensure that the resulting
$\lVert \underline{B}_{k}^{-1}\lVert$
and $\|\bar{B}_k^{-1}\|$ are bounded whenever possible. As will be
seen later, their boundedness is desirable for the JBD process
and the JBD method for the GSVD computation in finite precision.
%However, we should be well aware from the above that they may be large even though
%it is possible to choose $\{A,L\}$ or $\{L,A\}$ properly, when
%one of $Q_A$ and $Q_L$ is ill conditioned.

In finite precision, we next prove that relation \eqref{2.9} holds within $O(\epsilon)$,
which is important to design an efficient and reliable algorithm for the GSVD
computation. To this end, we first establish upper bounds for $\|B_{k}\|$
and $\|\widehat{B}_{k}\|$. From \eqref{3.1} and \eqref{3.3}, at the $i$-th step
we have
\begin{align}
& \tilde{v}_{i}(1:m)=\alpha_{i}u_{i}+\beta_{i+1}u_{i+1}+\tilde{f}_i , \label{3.12} \\
& (-1)^{i-1}\tilde{v}_{i}(m+1:m+p)= \hat{\alpha}_{i}\hat{u}_{i}
+\hat{\beta}_{i-1}\hat{u}_{i-1}+\bar{f}_{i} . \label{3.13}
\end{align}
Thus $\| \alpha_{i}u_{i}+\beta_{i+1}u_{i+1}\|^{2} = \| \tilde{v}_{i}(1:m)-\tilde{f}_i\|^{2}$,
which leads to
\begin{align}\label{3.14}
\begin{split}
\alpha_{i}^{2}+\beta_{i+1}^2
&=\|\tilde{v}_{i}(1:m)\|^{2}+\| \tilde{f}_i\|^{2}-
2\tilde{f}_i^{T}\tilde{v}_{i}(1:m)-2\alpha_{i}\beta_{i+1}u_{i+1}^{T}u_{i} \\
&\leq 1+O(c_{1}(m,n)\epsilon),
\end{split}
\end{align}
where we have used \eqref{3.4}. Similarly, we obtain
\begin{equation}\label{3.15}
\hat{\alpha}_{i}^2 + \hat{\beta}_{i-1}^{2}\leq 1+O(c_{2}(p,n)\epsilon) .
\end{equation}
Therefore, we have established the following lemma.
\begin{Lem}
In finite precision, we have\footnote{Here we use the result of an exercise
from \cite[Chapter 6, Problem 6.14]{Higham2002}, which gives an upper
bound for the $p$-norm of a row/column sparse matrix.}
\begin{align}
& \|B_{k}\| \leq \sqrt{2}\max_{1\leq i \leq k}(\alpha_{i}^{2}+\beta_{i+1}^{2})^{1/2}
\leq \sqrt{2}+O(c_{1}(m,n)\epsilon), \label{3.16} \\
& \|\bar{B}_{k}\| \leq \sqrt{2}\max_{1\leq i \leq k}(\hat{\alpha}_{i}^{2}+\hat{\beta}_{i-1}^{2})^{1/2}
\leq \sqrt{2}+O(c_{2}(p,n)\epsilon) \label{3.17} .
\end{align}
\end{Lem}

We are ready to derive an important relationship between $B_{k}$
and $\widehat{B}_{k}$ in finite precision, which was
presented in \cite[Theorem 3.1]{JiaLi2021}. Because of
its importance, here we give a simpler proof for the completeness.

\begin{theorem}\label{thm3.2}
With the hypothesis of Theorem~\ref{thm3.1}, in finite precision we have
\begin{equation}\label{3.18}
B_{k}^{T}B_{k}+\bar{B}_k^T\bar{B}_k=I_{k} + E_{k } ,
\end{equation}
where $\bar{B}_k=\widehat{B}_kD_k$ with $D_k=\diag(1,-1,\dots ,(-1)^{k-1})$,
and $E_{k}$ is symmetric tridiagonal
with its nonzero elements being $O(c_{3}(m,n,p) \epsilon)$ and
$c_{3}(m,n,p) =c_{1}(m,n)+c_{2}(p,n)$ in size.
\end{theorem}

\begin{proof}
Since
$$B_{k}^{T}B_{k}=\begin{pmatrix}
\alpha_{1}^{2}+\beta_{2}^{2} &\alpha_{2}\beta_{2} & & \\
\alpha_{2}\beta_{2} &\alpha_{2}^{2}+\beta_{3}^{2} &\ddots & \\
&\ddots &\ddots & \alpha_{k}\beta_{k}\\
& &\alpha_{k}\beta_{k} &\alpha_{k}^{2}+\beta_{k+1}^{2}
\end{pmatrix},$$
$$
\widehat{B}_{k}^{T}\widehat{B}_{k}=\begin{pmatrix}
\hat{\alpha}_{1}^{2}&\hat{\alpha}_{1}\hat{\beta}_{1} & & \\
\hat{\alpha}_{1}\hat{\beta}_{1}&\hat{\alpha}_{2}^{2}+\hat{\beta}_{1}^{2} &\ddots & \\
&\ddots &\ddots &\hat{\alpha}_{k-1}\hat{\beta}_{k-1} \\
& &\hat{\alpha}_{k-1}\hat{\beta}_{k-1} &\hat{\alpha}_{k}^{2}+\hat{\beta}_{k-1}^{2}
\end{pmatrix}
$$
are symmetric tridiagonal, $\bar{B}_k^T\bar{B}_k$ and $E_k$ in \eqref{3.18} are too.	

For the diagonal part, in finite precision, we have
$$
\hat{\beta}_{i}=(\alpha_{i+1}\beta_{i+1}/\hat{\alpha}_{i})
(1+\rho_i),
$$
where $|\rho_i|= O(\epsilon)$
\cite[Lemma 3.1]{Higham2002}, leading to
$$
\alpha_{i+1}\beta_{i+1} =\hat{\alpha}_{i}\hat{\beta}_{i}-\alpha_{i+1}\beta_{i+1}\rho_i.
$$
From \eqref{3.14} we have
$$\alpha_{i+1}\beta_{i+1} \leq \frac{\alpha_{i+1}^{2}+\beta_{i+1}^2}{2}
\leq \frac{2[1+O(c_{1}(m,n)\epsilon)]}{2}
=1+O(c_{1}(m,n)\epsilon).$$
Therefore, we obtain
\begin{equation}\label{3.19}
\alpha_{i+1}\beta_{i+1}=\hat{\alpha}_{i}\hat{\beta}_{i} +\gamma_{i}  ,
\end{equation}
where $|\gamma_{i}| \leq [1+O(c_{1}(m,n)\epsilon)]\epsilon = O(\epsilon)$.

For the subdiagonal part, by taking norms in \eqref{3.12} and \eqref{3.13},
we have
\begin{align*}
& \ \ \ \ \|\tilde{v}_{i}(1:m) \|^{2}+\|\tilde{v}_{i}(m+1:m+p)\|^{2} \\
&= \|\alpha_{i}u_{i}+\beta_{i+1}u_{i+1}+\tilde{f}_i\|^{2}+\|\hat{\alpha}_{i}\hat{u}_{i}
+\hat{\beta}_{i-1}\hat{u}_{i-1}+\bar{f}_{i}\|^{2} \\
&= \alpha_{i}^{2}+\beta_{i+1}^{2}+2\alpha_{i}\beta_{i+1}u_{i}^{T}u_{i+1}
+2\alpha_{i}u_{i}^{T}\tilde{f}_{i}+2\beta_{i+1}u_{i+1}^{T}
\tilde{f}_{i}+\|\tilde{f}_{i}\|^{2} \\
&\ \ \ \ +\hat{\alpha}_{i}^{2}+\hat{\beta}_{i-1}^{2}
+2\hat{\alpha}_{i}\hat{\beta}_{i-1}\hat{u}_{i}^{T}\hat{u}_{i-1}
+2\hat{\alpha}_{i}\hat{u}_{i}^{T}\bar{f}_{i}
+2\hat{\beta}_{i-1}\hat{u}_{i-1}^{T}\bar{f}_{i}+\|\bar{f}_{i}\|^{2} .
\end{align*}
From \eqref{3.14} and \eqref{3.15} we obtain
\begin{eqnarray*}
\alpha_{i}+\beta_{i+1}& \leq &\sqrt{2(\alpha_{i}^{2}+\beta_{i+1}^2)}
\leq\sqrt{2}+O(c_{1}(m,n)\epsilon),\\
\hat{\alpha}_{i}+\hat{\beta}_{i-1} &\leq& \sqrt{2(\hat{\alpha}_{i}^{2}+
\hat{\beta}_{i-1}^{2})}\leq\sqrt{2}+O(c_{2}(p,n)\epsilon),
\end{eqnarray*}
showing that
\begin{align*}
& \ \ \ \big|2\alpha_{i}u_{i}^{T}\tilde{f}_{i}+
2\beta_{i+1}u_{i+1}^{T}\tilde{f}_{i}+\|\tilde{f}_{i}\|^{2} +
2\hat{\alpha}_{i}\hat{u}_{i}^{T}\bar{f}_{i}
+2\hat{\beta}_{i-1}\hat{u}_{i-1}^{T}\bar{f}_{i}+\|\bar{f}_{i}\|^{2} \big| \\
& = O(2(\alpha_{i} + \beta_{i+1})\epsilon)+
O(2(\hat{\alpha}_{i} + \hat{\beta}_{i-1})\epsilon) = O(\epsilon) ,
\end{align*}
where we have neglected the higher order term $O(\epsilon^2)$.
Exploiting \eqref{3.4} and \eqref{3.5}, we obtain
$$
|2\alpha_{i}\beta_{i+1}u_{i}^{T}u_{i+1}+ 2\hat{\alpha}_{i}
\hat{\beta}_{i-1}\hat{u}_{i}^{T}\hat{u}_{i-1}|
= O(c_{3}(m,n,p) \epsilon) $$
with $c_{3}(m,n,p)=c_{1}(m,n)+c_{2}(p,n)$. Since
$$
1 = \|\tilde{v}_{i}\|^{2} = \|\tilde{v}_{i}(1:m) \|^{2}+\|\tilde{v}_{i}(m+1:m+p)\|^{2},
$$
we have
\begin{equation}\label{3.20}
\alpha_{i}^{2}+\beta_{i+1}^{2}+\hat{\alpha}_{i}^{2}+\hat{\beta}_{i-1}^{2}
= 1 + O(c_{3}(m,n,p) \epsilon) .
\end{equation}
Combining \eqref{3.19} with \eqref{3.20} gives rise to \eqref{3.18}.
\end{proof}

It is remarkable to notice that \eqref{3.18} holds without depending on
the orthogonality levels of $U_{k+1},\ V_k$ and $\widetilde{V}_k$.
Since \eqref{3.18} indicates that the squares of singular values
of $B_k$ and $\bar{B}_k$ can be determined each other with the error
$O(\epsilon)$, it can be used to design algorithms for the GSVD
computation. Therefore, if some
extreme generalized singular values of $\{A, L\}$ are desired, then, except
the approach that computes the generalized singular values of $\{B_k,\bar{B}_k\}$
and uses extreme ones to approximate the corresponding ones of
$\{A,L\}$, an alternative is to compute the singular values of
$B_{k}$ or $\widehat{B}_{k}$ (or equivalently $\bar{B}_k$) to obtain
approximate generalized singular values. For a convergence analysis of
the singular values of $B_k$ to some of $Q_A$ and more details,
we refer the reader to \cite{jia2020,jia2020b}. We will propose
the JBD method for the GSVD computation and consider some details
in Section \ref{sec4}, where we will revisit \eqref{3.18}.

Since $B_{k}$ and $\bar{B}_{k}$ have full column rank, by \eqref{3.18} we have
\begin{align*}
(B_{k}^{T})^{\dag}(B_{k}^{T}B_{k}+\bar{B}_{k}^{T}\bar{B}_{k})\bar{B}_{k}^{-1}
&= (B_{k}^{T})^{\dag}\bar{B}_{k}^{-1}+(B_{k}^{T})^{\dag}E_{k}\bar{B}_{k}^{-1} \\
&= (\bar{B}_{k}B_{k}^{T})^{\dag}+(B_{k}^{T})^{\dag}E_{k}\bar{B}_{k}^{-1},
\end{align*}
which leads to
$$ (\bar{B}_{k}B_{k}^{T})^{\dag} = (B_{k}^{T})^{\dag}B_{k}^{T}B_{k}
\bar{B}_{k}^{-1}+(B_{k}^{T})^{\dag}\bar{B}_{k}^{T}
- (B_{k}^{T})^{\dag}E_{k}\bar{B}_{k}^{-1} . $$
Since $B_{k}^{T}=\begin{pmatrix}
\underline{B}_{k}, \beta_{k+1}e_{k}
\end{pmatrix}$,
we have $\|(B_{k}^{T})^{\dag}\|\leq \|\underline{B}_{k}^{-1}\|$. Notice that
$\|(B_{k}^{T})^{\dag}B_{k}^{T}\|=1$. From \eqref{3.16} and \eqref{3.17}, we obtain
$$\|(\bar{B}_{k}B_{k}^{T})^{\dag} \| \leq \sqrt{2}(\|\bar{B}_{k}^{-1}\|+
\|\underline{B}_{k}^{-1}\|)+c_{0}(\epsilon) ,$$
where $$c_{0}(\epsilon)=%\|\bar{B}_{k}^{-1}\|O(c_{1}(m,n)\epsilon)+
%\|\underline{B}_{k}^{-1}\|O(c_{2}(p,n)\epsilon)+
\|\bar{B}_{k}^{-1}\|\|\underline{B}_{k}^{-1}\|O(c_{3}(m,n,p) \epsilon) .
$$

Since $\bar{B}_{k}B_{k}^{T}=\begin{pmatrix}
\bar{B}_{k}\underline{B}_{k}, \beta_{k+1}\bar{B}_{k}e_{k}
\end{pmatrix}$, by the interlacing property of singular values, we have
$\sigma_{k}(\bar{B}_{k}\underline{B}_{k})\leq \sigma_{k}(\bar{B}_{k}B_{k}^{T})$,
where $\sigma_{i}(\cdot)$ denotes the $i$-th largest singular value of a matrix,
proving that
$\|(\bar{B}_{k}B_{k}^{T})^{\dag} \| \leq \| (\bar{B}_{k}\underline{B}_{k})^{-1}\|$.
However, in our experiments, it has been found that
$ \| (\bar{B}_{k}\underline{B}_{k})^{-1}\|\leq \bar{c}\|(\bar{B}_{k}B_{k}^{T})^{\dag} \|$
always holds with a modest constant $\bar{c}=O(1)$, usually $\bar{c} \leq 5$.
Notice that $c_{0}(\epsilon)$ is much smaller than
$\|\bar{B}_{k}^{-1}\|+\|\underline{B}_{k}^{-1}\|$. Therefore,
in the sequel we
will suppose and use the following upper bound
for $\|(\bar{B}_{k}\underline{B}_{k})^{-1}\|$:
\begin{equation}\label{3.21}
\|(\bar{B}_{k}\underline{B}_{k})^{-1}\|\leq c\sqrt{2}(\|\bar{B}_{k}^{-1}\|
+\|\underline{B}_{k}^{-1}\|),
\end{equation}
where $c> 1$ slightly.

We next establish a relationship between the JBD process for
computing $\widehat{B}_{k}$ and the upper Lanczos bidiagonalization of $Q_{L}$
in finite precision.

\begin{theorem}\label{thm3.3}
	With the hypothesis of Theorem \ref{thm3.1}, in finite precision, we have
\begin{eqnarray}
Q_{L}\widehat{V}_{k}&=&\widehat{U}_{k}\widehat{B}_{k}+\widehat{F}_{k},\label{3.23}\\
	Q_{L}^{T}\widehat{U}_{k}&=&\widehat{V}_{k}\widehat{B}_{k}^{T}
+\hat{\beta}_{k}\hat{v}_{k+1}e_{k}^{T}+\widehat{G}_{k}\label{3.24}
	\end{eqnarray}
	with
	\begin{eqnarray}
 \|\widehat{F}_{k} \|&=&O(\lVert \underline{B}_{k}^{-1}\lVert\epsilon),\\
	\|\widehat{G}_{k}\|&=& O(c_{4}(m,n,p,k)\epsilon)\label{3.25}
	\end{eqnarray}
with
\begin{equation}\label{c4}
c_{4}(m,n,p,k)=\|\underline{B}_{k}^{-1}\|+c_{3}(m,n,p)\|\widehat{B}_{k}^{-1}\|.
\end{equation}
\end{theorem}

\begin{proof}
Recall the notation in \eqref{1.5}, \eqref{2.7} and \eqref{2.11}.
Then exploiting Lemma~\ref{lem3.1}, we can rewrite \eqref{3.3} as
$$(0_{p\times m},I_{p})\widehat{V}_{k}=\widehat{U}_{k}\widehat{B}_{k}+\widehat{F}_{k},$$
where
\begin{equation}\label{3.22}
\widehat{F}_{k}=\bar{F}_{k}-(0_{p\times m},I_{p})(\widetilde{V}_{k}-QV_{k})D_k.
\end{equation}
Therefore, \eqref{3.23} holds.

From \eqref{3.10} and \eqref{3.11}, we obtain
\begin{align}\label{3.26}
\begin{split}
Q_{A}^{T}Q_{A}V_{k} &=Q_{A}^{T}U_{k+1}B_{k}+Q_{A}^{T}F_{k} \\
&= (V_{k}B_{k}^{T}+\alpha_{k+1}v_{k+1}e_{k+1}^{T}+G_{k+1})B_{k}+Q_{A}^{T}F_{k} \\
&= V_{k}B_{k}^{T}B_{k}+\alpha_{k+1}\beta_{k+1}v_{k+1}e_{k}^{T}+G_{k+1}B_{k}+
Q_{A}^{T}F_{k} .
\end{split}
\end{align}
Premultiplying and postmultiplying \eqref{3.23} by $Q_{L}^{T}$ and $D_k$ gives
$$
Q_{L}^{T}Q_{L}V_{k}=(Q_{L}^{T}\widehat{U}_{k}\widehat{B}_{k}+
Q_{L}^{T}\widehat{F}_{k})D_k.
$$
Summing the above two equalities yields
\begin{align*}
V_{k} &=(Q_{A}^{T}Q_{A}+Q_{L}^{T}Q_{L})V_{k} \\
&= V_{k}B_{k}^{T}B_{k}+Q_{L}^{T}\widehat{U}_{k}\widehat{B}_{k}D_k+\alpha_{k+1}
\beta_{k+1}v_{k+1}
e_{k}^{T} +(G_{k+1}B_{k}+Q_{A}^{T}F_{k}+Q_{L}^{T}\widehat{F}_{k}D_k) \\
&=  V_{k}(I_{k}-D_k\widehat{B}_{k}^{T}\widehat{B}_{k}D_k+E_{k})
+Q_{L}^{T}\widehat{U}_{k}\widehat{B}_{k}D_k+\alpha_{k+1}\beta_{k+1}v_{k+1}e_{k}^{T} \\
&\ \ \ \ +(G_{k+1}B_{k}+Q_{A}^{T}F_{k}+Q_{L}^{T}\widehat{F}_{k}D_k).
\end{align*}	
From this relation, $\widehat{V}_k=V_kD_k$ and $D_k^2=I_k$ it follows that
\begin{align*}
\widehat{V}_{k}\widehat{B}_{k}^{T}\widehat{B}_{k}
&= Q_{L}^{T}\widehat{U}_{k}\widehat{B}_{k}+\alpha_{k+1}\beta_{k+1}v_{k+1}e_{k}^{T}D_k+
(G_{k+1}B_{k}+Q_{A}^{T}F_{k} +Q_{L}^{T}\widehat{F}_{k}D_k+V_{k}E_{k})D_k \\
&= Q_{L}^{T}\widehat{U}_{k}\widehat{B}_{k}-(\hat{\alpha}_{k}\hat{\beta}_{k}+
\gamma_{k})\hat{v}_{k+1}
e_{k}^{T}+(G_{k+1}B_{k}+Q_{A}^{T}F_{k}+Q_{L}^{T}\widehat{F}_{k}D_k+V_{k}E_{k})D_k,
\end{align*}
which shows that
$$\widehat{V}_{k}\widehat{B}_{k}^{T}
=Q_{L}^{T}\widehat{U}_{k}-\hat{\beta}_{k}\hat{v}_{k+1}e_{k}^{T}+ E_{1} + E_{2},$$
where
$$E_{1} = [(G_{k+1}B_{k}+V_{k}E_{k})D_k- \gamma_{k}\hat{v}_{k+1}e_{k}^{T}]\widehat{B}_{k}^{-1} , \ \
E_{2} = (Q_{A}^{T}F_{k}D_k + Q_{L}^{T}\widehat{F}_{k})\widehat{B}_{k}^{-1} .$$

From \eqref{2.14}, we have $\|V_{k}\| \leq \sqrt{1+\eta(V_{k})}$.
Notice that $\gamma_{k}$ in \eqref{3.19} satisfies $|\gamma_{k}|=O(\epsilon)$
and the elements of $\|E_{k}\|$ in \eqref{3.18} are $O(c_{3}(m,n,p))$.
Using the upper bounds for $\|B_{k}\|$ in \eqref{3.16}, we have
$$\|E_{1}\|=O(\bar{c}_{1}(m,n,p,k)\epsilon) $$ with $\bar{c}_{1}(m,n,p,k)
=(\sqrt{2}+c_{3}(m,n,p))\|\widehat{B}_{k}^{-1}\|$. Using
the expressions of $F_{k}$, $\widehat{F}_{k}$ and
$\widetilde{V}_{k} - QV_{k}$ in \eqref{3.9}, \eqref{3.22} and \eqref{3.8},
respectively, we have
\begin{align*}
Q_{A}^{T}F_{k}D_k + Q_{L}^{T}\widehat{F}_{k}
&= \begin{pmatrix}
Q_{A}^{T} & Q_{L}^{T}
\end{pmatrix}
\begin{pmatrix}
F_{k}D_k \\ \widehat{F}_{k}
\end{pmatrix} \\
&= Q^{T}\Big[
\begin{pmatrix}
\widetilde{F}_{k}D_k \\ \bar{F}_{k}
\end{pmatrix} -
\begin{pmatrix}
I_{m} & 0_{m\times p} \\
0_{p\times m} & I_{p}
\end{pmatrix}
(\widetilde{V}_{k}-QV_{k})D_k
 \Big] \\
&= Q^{T}\Big[
\begin{pmatrix}
\widetilde{F}_{k}D_k \\ \bar{F}_{k}
\end{pmatrix} + (I_{m+p}-QQ^{T})\widetilde{G}_{k}\underline{B}_{k}^{-1}D_k \Big] .
\end{align*}
From \eqref{3.21}, we obtain
$$\|\underline{B}_{k}^{-1}D_k\widehat{B}_{k}^{-1}\|=\|(\bar{B}_{k}
\underline{B}_{k})^{-1}\|\leq c\sqrt{2}(\|\bar{B}_{k}^{-1}\|+\|\underline{B}_{k}^{-1}\|). $$
Since $\|\bar{B}_{k}^{-1}\|=\|\widehat{B}_{k}^{-1}\|$, it holds that
$$\|E_{2}\|=O(\bar{c}_{2}(k)\epsilon) $$
with $\bar{c}_{2}(k)=c\sqrt{2}(\|\widehat{B}_{k}^{-1}\|+\|\underline{B}_{k}^{-1}\|)
+\|\widehat{B}_{k}^{-1}\|$. Letting $\widehat{G}_{k}=-E_{1}-E_{2}$ leads to
the desired result.
\end{proof}

Notice that \eqref{3.23} and \eqref{3.24} are the corresponding versions
of \eqref{2.11} in the presence of rounding errors. Therefore,
this theorem indicates that the JBD process for computing $\widehat{U}_{k}$,
$\widehat{V}_{k}$ and $\widehat{B}_{k}$ is no longer equivalent to
the standard upper Lanczos bidiagonalization of $Q_{L}$ with the rounding error
$O(\|Q_{L}\|\epsilon)=O(\epsilon)$ in finite precision,
since the error terms $\widehat{F}_{k}$ and $\widehat{G}_{k}$
are amplified gradually
once $\|\underline{B}_{k}^{-1}\|$ and $\|\widehat{B}_{k}^{-1}\|$ grow
as $k$ increases.

In summary, unlike standard lower and upper Lanczos bidiagonalizations
of $Q_A$ and $Q_L$ in finite precision,
the JBD process in finite precision is lower and
upper Lanczos bidiagonalizations with gradually growing error terms,
where the growths of the error terms are dictated by
$\|\underline{B}_{k}^{-1}\|$ and $\|\widehat{B}_{k}^{-1}\|$ separately.

%\begin{remark}\label{rem3.1}
%The size of $\|\underline{B}_{k}^{-1}\|$ can be controlled.
%In the GSVD computation problems, usually at least one of $A$ and $L$
%is well conditioned, that is, at least one of $Q_{A}$ and $Q_{L}$
%is well conditioned. If $Q_{A}$ is well conditioned,
%we implement the JBD process on $\{A, L\}$, which generates a well-conditioned
%$\underline{B}_{k}$; otherwise if $Q_{L}$ is well conditioned, we implement
%the JBD process on $\{L, A\}$, which makes the corresponding
%$\underline{B}_{k}$ well conditioned.
%By this modification, we can always ensure
%that $\|\underline{B}_{k}^{-1}\|$ is not ill conditioned.
%\end{remark}

%%%%%%%%%%%%%%%%%%%%%%%%%%%%%%%%%%%%%%%%%%%%%%%%%%%%%%%%%%%%%%%%%%%
\subsection{Loss of orthogonality of the Lanczos vectors}

It is well known that, for Lanczos bidiagonalization, the
numerical orthogonality of Lanczos vectors is gradually lost due to
the influence of rounding errors. Once the orthogonality
is lost at one iteration, the errors will propagate to later steps,
which leads to the more loss of orthogonality of subsequently computed
Lanczos vectors \cite{Larsen1998,Simon1984a}. For the JBD process,
there is similar loss of orthogonality of each of the three sets of basis vectors.
As it will turn out, the orthogonality levels of
$U_{k+1}$, $\widetilde{V}_{k}$ and $\widehat{U}_{k}$ are closely related
and interact.
Below we prove how the orthogonality level of $\widehat{U}_{k}$ is affected
by those of both $U_{k+1}$ and $\widetilde{V}_{k}$.

\begin{theorem}\label{thm3.4}
With the hypothesis of Theorem \ref{thm3.1}, in finite precision, we have
\begin{equation}\label{3.27}
\eta(\widehat{U}_{k}) \leq \|\widehat{B}_{k}^{-1}\|^{2}\big[\eta(\widetilde{V}_{k})+2\eta(U_{k+1})
+O(c_{3}(m,n,p)\epsilon)\big] .
\end{equation}
\end{theorem}

\begin{proof}
From \eqref{3.1} and \eqref{3.3}, we have
$$\widetilde{V}_{k}=
\begin{pmatrix}
U_{k+1}B_{k} \\\widehat{U}_{k}\bar{B}_{k}
\end{pmatrix} +
\begin{pmatrix}
\widetilde{F}_{k} \\ \bar{F}_{k}D_k
\end{pmatrix} ,$$
which shows that
\begin{equation}\label{vkuk}
\widetilde{V}_{k}^{T}\widetilde{V}_{k}=B_{k}^{T}U_{k+1}^{T}U_{k+1}B_{k}+
\bar{B}_{k}^{T}\widehat{U}_{k}^{T}\widehat{U}_{k}\bar{B}_{k}+E_{3},
\end{equation}
where
$$ E_{3}= B_{k}^{T}U_{k+1}^{T}\widetilde{F}_{k}+\bar{B}_{k}^{T}\widehat{U}_{k}^{T}\bar{F}_{k}P
+\widetilde{F}_{k}^{T}U_{k+1}B_{k}+P\bar{F}_{k}^{T}\widehat{U}_{k}\bar{B}_{k}
+\widetilde{F}_{k}^{T}\widetilde{F}_{k}+P\bar{F}_{k}^{T}\bar{F}_{k}D_k. $$
From \eqref{3.18}, we obtain
$$I_{k}-\widetilde{V}_{k}^{T}\widetilde{V}_{k}= B_{k}^{T}(I_{k+1}-U_{k+1}^{T}U_{k+1})B_{k}+
\bar{B}_{k}^{T}(I_{k}-\widehat{U}_{k}^{T}\widehat{U}_{k})\bar{B}_{k}-E_{k}-E_{3},
$$
which, together with \eqref{vkuk}, yields
\begin{equation}\label{3.28}
I_{k}-\widehat{U}_{k}^{T}\widehat{U}_{k}= \bar{B}_{k}^{-T}
\big[(I_{k}-\widetilde{V}_{k}^{T}\widetilde{V}_{k})-B_{k}^{T}(I_{k+1}-
U_{k+1}^{T}U_{k+1})B_{k}+E_{k}+E_{3} \big]\bar{B}_{k}^{-1} .
\end{equation}

Notice that $\|U_{k+1}\|\leq(1+\eta(U_{k+1}))^{1/2}$ and
$\|\widehat{U}_{k}\|\leq(1+\eta(\widehat{U}_{k}))^{1/2}$.
Using the bounds for $\|B_{k}\|$ and $\|\widehat{B}_{k}\|$ in \eqref{3.16} and \eqref{3.17},
respectively, by a simple calculation, we obtain
$$\|E_{3}\|=O(\epsilon) .$$
Using the bound for $\|B_{k}\|$, we have
\begin{align*}
\|B_{k}^{T}(I_{k+1}-U_{k+1}^{T}U_{k+1})B_{k}\|
&\leq \|B_{k}\|^{2}\|I_{k+1}-U_{k+1}^{T}U_{k+1}\| \\
&\leq 2\|I_{k+1}-U_{k+1}^{T}U_{k+1}\|+O(c_{1}(m,n)\epsilon) .
\end{align*}
Taking norms in \eqref{3.28} proves the desired result.
\end{proof}

The above theorem indicates that as long as $\widehat{B}_{k}$ is not
ill conditioned, the orthogonality of $\widehat{U}_{k}$ is as good as
those of $U_{k+1}$ and $\widetilde{V}_{k}$. Therefore, it is only necessary
to perform some sort of reorthogonalization strategies to maintain
desired orthogonality levels of $U_{k+1}$ and $\widetilde{V}_{k}$.

The following result shows that the orthogonality levels
of the long $\widetilde{V}_k\in\mathbb{R}^{(m+p)\times k}$ and
the short $V_k\in\mathbb{R}^{p\times k}$ are the same within $O(\epsilon)$ under
some mild condition.

\begin{theorem}
With definition \eqref{2.13}, it holds that
\begin{equation}\label{3.29}
|\eta(\widetilde{V}_{k})-\eta(V_{k})|= O(\lVert \underline{B}_{k}^{-1}\lVert^{2}\epsilon^{2}) .
\end{equation}
\end{theorem}
\begin{proof}
From \eqref{3.8}, we have
\begin{align*}
\widetilde{V}_{k}^{T}(I_{m+p}-QQ^{T})\widetilde{V}_{k}
&= \widetilde{V}_{k}^{T}(\widetilde{V}_{k} - QV_{k})
= \widetilde{V}_{k}^{T}(QQ^{T}-I_{m+p})\widetilde{G}_{k}\underline{B}_{k}^{-1}\\
&=-[(I_{m+p}-QQ^{T})\widetilde{V}_{k}]^{T}\widetilde{G}_{k}\underline{B}_{k}^{-1} \\
&=  \underline{B}_{k}^{-T}\widetilde{G}_{k}^{T}(I_{m+p}-QQ^{T})\widetilde{G}_{k}\underline{B}_{k}^{-1} .
\end{align*}
Thus
\begin{align*}
\begin{split}
I_{k} - V_{k}^{T}V_{k}
&= I_{k} - \widetilde{V}_{k}^{T}QQ^{T}\widetilde{V}_{k} =
I_{k} - \widetilde{V}_{k}^{T}\widetilde{V}_{k} +
\widetilde{V}_{k}^{T}(I_{m+p}-QQ^{T})\widetilde{V}_{k} \\
&= (I_{k} - \widetilde{V}_{k}^{T}\widetilde{V}_{k}) +
\underline{B}_{k}^{-T}\widetilde{G}_{k}^{T}(I_{m+p}-QQ^{T})
\widetilde{G}_{k}\underline{B}_{k}^{-1}.
\end{split}
\end{align*}
Therefore, \eqref{3.29} holds.
\end{proof}

This theorem shows that provided that $\|\underline{B}_{k}^{-1}\|$ is not too big, say no more
than $\epsilon^{-1/2}$, we have $|\eta(\widetilde{V}_{k})-\eta(V_{k})|=O(\epsilon)$.
Therefore, it is only necessary to maintain the same
orthogonality level of $V_k$ in order to make $\widetilde{V}_k$ achieve
a desired orthogonality level. From \eqref{3.27}, this shows that
we only need to maintain desired orthogonality levels of $U_{k+1}$ and $V_{k}$ so as to
make $\widehat{U}_k$ have a desired orthogonality level.

%%%%%%%%%%%%%%%%%%%%%%%%%%%%%%%%%%%%%%%%%%%%%%%%%%%%%%%%%%%%%%%%%%%
\section{The JBD method for the GSVD computation}\label{sec4}

In this section, we review how to use the JBD process to compute some extreme
GSVD components of $\{A,L\}$ \cite{Zha1996}, which leads to the JBD method for
the large GSVD computation, and make an analysis on
the convergence of the approximate generalized singular
values in finite precision by exploiting the previous
results.

\subsection{The JBD method}\label{sec4.1}

For ease of presentation, we do not take into account rounding errors
when computing the GSVD of $\{B_k,\bar{B}_k\}$ or the SVD of
$B_k$ or $\widehat{B}_k$, that is, we assume that the compact SVD of $B_{k}$
is computed accurately:
\begin{equation}\label{4.1}
B_{k} = P_{k}\Theta_{k}W_{k}^{T}, \ \ \Theta_{k}=\diag(c_{1}^{(k)}, \dots,
c_{k}^{(k)}), \ \
1 \geq c_{1}^{(k)} > \dots > c_{k}^{(k)} \geq 0,
\end{equation}
where $P_{k}=(p_{1}^{(k)}, \dots, p_{k}^{(k)})\in\mathbb{R}^{(k+1)\times k}$
is orthonormal
and $W_{k}=(w_{1}^{(k)}, \dots, w_{k}^{(k)})\in\mathbb{R}^{k\times k}$
is orthogonal. The SVD \eqref{4.1} can be obtained by a standard SVD
algorithm since $B_{k}$ is small sized. Then we have $k$ approximate
generalized singular values
$\{c_{i}^{(k)}, (1-(c_{i}^{(k)})^{2})^{1/2}\}$, $i=1,2,\ldots,k$ of $\{A, L\}$,
and the approximate right generalized singular vectors
are the $x_{i}^{(k)}=R^{-1}V_{k}w_{i}^{(k)}$ and the approximations to
the left generalized singular vectors $p_i^A$
are $y_{i}^{(k)} = U_{k+1}p_{i}^{(k)}$. Among these $k$ approximations, we
pick up a few largest and/or smallest ones as approximations to the
largest and/or smallest $c_i/s_i$ and the corresponding $x_i$ and $p_i^A$.

If we also want to compute an approximation of the left generalized singular
vector $p_i^L$, we need to compute the SVD of $\bar{B}_{k}$.
From \eqref{2.9}, it is known that $(B_k^T,\bar{B}_k^T)^T$ is column
orthonormal. Therefore, the CS decomposition of the pair
$\{B_k,\bar{B}_k\}$ is its GSVD, and
that the right singular vectors of $B_k$ and $\bar{B}_k$ are identical. As
a result, we can assume that the SVD of $\bar{B}_{k}$ is
\begin{equation}\label{4.2}
\bar{B}_{k} = \bar{P}_{k}\Psi_{k}W_{k}^{T}, \ \
\Psi_{k}={\rm diag}(\bar{s}_{1}^{(k)}, \dots, \bar{s}_{k}^{(k)}), \ \
0 \leq \bar{s}_{1}^{(k)} < \dots < \bar{s}_{k}^{(k)} \leq 1 \ ,
\end{equation}
where $\bar{P}_{k}=(\bar{p}_{1}^{(k)}, \dots, \bar{p}_{k}^{(k)})
\in\mathbb{R}^{k\times k}$ and $W_{k}=(w_{1}^{(k)}, \dots,
w_{k}^{(k)})\in\mathbb{R}^{k\times k}$ are orthogonal.
Then $z_{i}^{(k)}= \widehat{U}_{k}\bar{p}_{i}^{(k)}$
is an approximation to $p_{i}^{L}$. The approximate generalized singular values
and the corresponding approximate right generalized singular vectors are
$\{(1-(\bar{s}_{i}^{(k)})^{2})^{1/2},
\bar{s}_{i}^{(k)}\}$ and $R^{-1}V_{k}w_{i}^{(k)}$, respectively.
We have seen that the approximate right generalized singular vectors
obtained by the SVDs of $B_k$ and $\bar{B}_k$ are the same, as expected.
We also comment that if the JBD process is performed in exact arithmetic
then mathematically
$\bar{s}_i^{(k)}=s_i^{(k)}=\sqrt{1-(c_i^{(k)})^2}$.

Alternatively, we compute the GSVD of the pair $\{B_k,\bar{B}_k\}$:
\begin{equation}
B_k=P_kC_kW_k^T,\ \bar{B}_k=\bar{P}_k S_kW_k^T,
\end{equation}
where $C_k={\rm diag}(c_1^{(k)},\ldots,c_k^{(k)})$ and
$S_k={\rm diag}(s_1^{(k)},\ldots,s_k^{(k)})$, and $P_k$ and $\bar{P}_k$
are as those defined in \eqref{4.1} and \eqref{4.2}.
The approximate generalized singular values are
$\{c_i^{(k)},s_i^{(k)}\}$ or $c_i^{(k)}/s_i^{(k)}$, the corresponding
left approximate generalized singular vectors for $A$ and $L$ are
$U_{k+1}p_i^{(k)}$ and $\widehat{U}_k\bar{p}_i^{(k)}$, respectively,
and the right approximate generalized singular vectors
are $x_i^{(k)}=Z_kw_i^{(k)}=R^{-1}V_kw_i^{(k)}$.

We remark that, in finite precision, the
$c_i^{(k)}$ and $s_i^{(k)}$ in the GSVD of $\{B_k,\bar{B}_k\}$
are computed with no loss of
accuracy and satisfy $(c_i^{(k)})^2+(s_i^{(k)})^2=1$ unconditionally
within working precision, irrespective of their sizes.
In contrast, it is easily justified from \eqref{3.18} that
if either of $B_k$ and $\bar{B}_k$ is ill conditioned then
$(1-(c_i^{(k)})^2)^{1/2}$ or $(1-(\bar{s}_i^{(k)})^2)^{1/2}$ as
the singular values of $\bar{B}_k$ or $B_k$ may be
inaccurate when $(c_i^{(k)})^2=1-O(\epsilon)$ or
$(\bar{s}_i^{(k)})^2=1-O(\epsilon)$, which corresponds to an ill-conditioned
$\bar{B}_k$ or $B_k$ with small singular values no more than
$O(\epsilon^{1/2})$,
respectively. In other words, if $\bar{B}_k$ is ill conditioned and
has a singular value no more than $O(\epsilon^{1/2})$,
one would compute it with no accuracy from the corresponding singular value
of $B_k$, and vice versa. In this case, the corresponding approximate
generalized singular values may be quite inaccurate and even has no accuracy;
for the accuracy and reliability, we suggest to
compute the singular values $c_i^{(k)}$ and $\bar{s}_i^{(k)}$ of
both $B_k$ and $\bar{B}_k$ separately
and use $\{c_i^{(k)}, \bar{s}_i^{(k)}\}$ as approximations to
some extreme $\{c_i,s_i\}$.

For the computation of $x_{i}^{(k)}$, it is shown in \cite{Zha1996}
that the explicit inversion $R^{-1}$ can be avoided by noticing that
\begin{equation}\label{rightsv}
\begin{pmatrix}
A \\ L
\end{pmatrix}x_{i}^{(k)}=QRR^{-1}V_{k}w_{i}^{(k)}=\widetilde{V}_{k}w_{i}^{(k)}.
\end{equation}
Then, solving the corresponding consistent linear system by an iterative solver,
e.g., the LSQR algorithm, we obtain $x_{i}^{(k)}$.

\subsection{Convergence, accuracy and reorthogonalization}\label{sec4.2}

In the presence of rounding errors, the behavior of the JBD algorithm
may deviate from that in exact arithmetic.
As mentioned previously, we only consider the rounding errors in the JBD process,
and assume that the SVDs of $B_{k}$ and $\bar{B}_{k}$ are computed
accurately, so are $x_{i}^{(k)}$, $y_{i}^{(k)}$ and $z_{i}^{(k)}$.

First, we investigate the convergence of the computed generalized
singular values using the SVD of $B_{k}$.
Since $c_{i}^{2}+s_{i}^{2}=1$, in order to compute the generalized singular
value $\{c_{i}, s_{i}\}$, we only need to compute $c_{i}$, which is
a singular value of $Q_{A}$. Note that $c_{i}^{(k)}$,
the singular value of $B_{k}$, is a computed Ritz value
of $Q_{A}$, due to the property that the $k$-step JBD process
for computing $B_{k}$ is Lanczos bidiagonalization applied to $Q_{A}$ with
the rounding error $O(\lVert \underline{B}_{k}^{-1}\lVert\epsilon)$; see
Theorem~\ref{thm3.1}. In exact arithmetic, it is shown in \cite{JiaYang2020}
that the eigenvalues of $B_k^TB_k$ are the Ritz values of $Q_A^TQ_A$ with respect
to the Krylov subspace generated by $Q_A^Tb, (Q_A^TQ_A)Q_A^Tb,\ldots,(Q_A^TQ_A)^{k-1}
Q_A^Tb$ and the $k$-step lower bidiagonalization of $Q_A$ is equivalent
to the symmetric Lanczos process applied to $Q_A^TQ_A$ and the starting vector
$Q_A^Tb/\|Q_A^Tb\|$ that generates the symmetric tridiagonal matrix
$B_k^TB_k$ and an orthonormal basis matrix $V_k$ of the aforementioned Krylov
subspace. Therefore, the convergence theory of the symmetric Lanczos method
applies (cf. \cite{Parlett1980,Saad1980}), and the singular values of
$B_k$ generally favor some largest and smallest ones of $Q_A$. More convergence
results and details have been given in \cite{jia2020a,jia2020,JiaYang2020}.
In finite precision, typical convergence features of the symmetric
Lanczos method carry over to our case, which include:
(a) the computed Ritz values generally favor the extreme singular values of $Q_{A}$,
while interior singular values are hard to
be approximated; (b) if the extreme singular values of $Q_A$ are better
separated one another, the corresponding computed Ritz values converge more rapidly.
As a result, if we compute $c_{i}$ by the SVD of $B_{k}$,
the computed Ritz values approximating the extreme generalized singular
values of $\{A,L\}$ will first converge generally.

In finite precision, some of the singular values of $B_{k}$ could be numerically
multiple as the iteration number $k$ increases,
which may produce ghost approximations to some
generalized singular values of $\{A,L\}$. A direct consequence
is that a simple or genuine multiple
generalized singular value of $\{A,L\}$ could be approximated by
numerically multiple computed Ritz values, which could lead
to a convergence delay of computed Ritz values.
This phenomenon could occur only when the computed
Lanczos vectors lose orthogonality and can be avoided by using some types of
reorthogonalization strategies, such as full reorthogonalization
or the more efficient one-sided reorthogonalization \cite{Simon2000}.

By Theorem \ref{thm3.1}, the JBD process for computing $B_{k}$ is
the lower Lanczos bidiagonalization of $Q_{A}$ with the
rounding error $O(\|\underline{B}_{k}^{-1}\|\epsilon)$ that is comparable
to $O(\epsilon)$ whenever $\|\underline{B}_{k}^{-1}\|$
is modest. If the JBD process is
implemented with one-sided reorthogonalization of $v_{i}$ such
that the orthogonality level of $V_{k}$ achieves
$O(\epsilon)$, exploiting the backward error results on
the Lanczos bidiagonalization with one-sided
reorthogonalization \cite[Theorem 5.2 and Corollary 5.1]{Barlow2013},
we can deduce that the computed $B_{k}$ is the exact one
generated by the Lanczos bidiagonalization of a nearby matrix
$Q_{A}+E_k$ with $\|E_k\|=O(\|\underline{B}_{k}^{-1}\|\epsilon)$.
Therefore, by the perturbation theory of the singular
values \cite[Corollary 8.6.2]{Golub2013}, the extreme singular
values of $Q_{A}$ can be computed with the ultimate accuracy
$O(\|\underline{B}_{k}^{-1}\|\epsilon)$, and ghost Ritz values
can be avoided.

It is known that a necessary orthogonality level of the
computed basis vectors is the semiorthogonality for the symmetric
Lanczos process \cite{Parlett1980,Simon1984a} and
Lanczos bidiagonalization \cite{Larsen1998}. Based on
Theorem \ref{thm3.1} and \cite[Theorem 5]{Larsen1998},
it is straightforward to prove that the semiorthogonality
suffices in finite precision.

\begin{theorem}\label{thm4.1}
Assume that the compact QR factorizations of $U_{k+1}$ and
$V_{k}$ are $U_{k+1}=M_{k+1}R_{k+1}$ and $V_{k}=N_{k}S_{k}$,
where the diagonals of $R_{k+1}$ and $S_k$ are positive, and
let $\delta = O(\|\underline{B}_{k}^{-1}\|\epsilon)$. If
$U_{k+1}$ and $V_{k}$ satisfy the semiorthogonality
\begin{equation}\label{4.3}
\xi(U_{k+1}), \  \xi(V_{k}) \leqslant \sqrt{\delta/(2k+1)},
\footnote{In Theorem 5 of Larsen \cite{Larsen1998}, the right-hand side of \eqref{4.3}
is $\sqrt{\delta/k}$
instead of $\sqrt{\delta/(2k+1)}$, but Larsen does not justify it rigorously.
In fact, this result is a corresponding counterpart of \cite[Theorem 4]{Simon1984a}.
Since the $k$-step Lanczos bidiagonalization of $Q_{A}$ with the starting vector
$b$ is equivalent to the $(2k+1)$-step symmetric Lanczos
process \cite[\S 7.6.1]{Bjorck1996} of
	$\bar{C} = \begin{pmatrix}
	0 & Q_{A} \\
	Q_{A}^{T} & 0
	\end{pmatrix}$ with the starting vector
	$\bar{b} = \begin{pmatrix}
	b \\ 0
	\end{pmatrix}$, which holds not only in exact arithmetic
but also in finite precision,
the denominator in \eqref{4.3} should be $2k+1$.}
\end{equation}
then
\begin{equation}\label{4.4}
M_{k+1}^{T}Q_{A}N_{k}=B_{k}+\widetilde{E}_{k}  ,
\end{equation}
where the elements of $\widetilde{E}_{k}$ are $O(\delta)=O(\|\underline{B}_{k}^{-1}\|\epsilon)$
in size.	
\end{theorem}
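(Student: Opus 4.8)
The plan is to deduce the statement from the known finite-precision analysis of Lanczos bidiagonalization with semiorthogonality --- \cite[Theorem 5]{Larsen1998}, which is the bidiagonalization counterpart of \cite[Theorem 4]{Simon1984a} --- by tracking the one modification forced by the joint structure, namely that the roundoff level in the governing recurrence is $\delta=O(\|\underline{B}_k^{-1}\|\epsilon)$ rather than $\epsilon$. Theorem \ref{thm3.1} supplies exactly what is needed: relations \eqref{3.8}--\eqref{3.9}, together with the local orthogonality \eqref{3.4} and the normalization $\|u_i\|=\|v_i\|=1$, say that the computed triple $\{U_{k+1},V_k,B_k\}$ is precisely the output of a Lanczos bidiagonalization of $Q_A$ run in finite precision arithmetic, the only deviation from the standard situation being that $\|F_k\|=O(\delta)$ instead of $O(\epsilon)$ (the term $G_{k+1}$ in \eqref{3.9} still satisfies $\|G_{k+1}\|=O(\epsilon)=O(\delta)$, and $\|Q_A\|\le1$). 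Since the arguments of Simon and Larsen use only the \emph{sizes} of the roundoff terms and the local-orthogonality relations, not their detailed form, the whole analysis carries over verbatim with every occurrence of $\epsilon$ replaced by $\delta$; in particular the semiorthogonality threshold becomes $\sqrt{\delta/(2k+1)}$ --- the denominator $2k+1$ being the one justified in the footnote to \eqref{4.3} --- and the conclusion becomes \eqref{4.4} with all entries of $X_k$ of size $O(\delta)$. Equivalently, one may apply \cite[Theorem 4]{Simon1984a} to the finite-precision symmetric Lanczos process of $\bar{C}=\begin{pmatrix} O & Q_A \\ Q_A^{T} & O \end{pmatrix}$, to which the JBD computation of $B_k$ reduces.

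To make the mechanism visible I would record the main steps in the present notation. With the compact QR factorizations $U_{k+1}=M_{k+1}R_{k+1}$ and $V_k=N_kS_k$ (nonnegative triangular diagonals; $S_k$ is invertible since $\eta(V_k)<1$), \eqref{3.8} immediately gives
\[
M_{k+1}^{T}Q_AN_k \;=\; R_{k+1}B_kS_k^{-1} + M_{k+1}^{T}F_kS_k^{-1},
\]
using $M_{k+1}^{T}U_{k+1}=R_{k+1}$ and $N_k=V_kS_k^{-1}$. The hypothesis \eqref{4.3}, via $R_{k+1}^{T}R_{k+1}=U_{k+1}^{T}U_{k+1}$ and $S_k^{T}S_k=V_k^{T}V_k$, forces $R_{k+1}$ and $S_k^{-1}$ to be uniformly bounded with off-diagonal entries of order $\xi:=\max\{\xi(U_{k+1}),\xi(V_k)\}$, so the second term above is $O(\delta)$. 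The substance is then to show that $R_{k+1}B_kS_k^{-1}-B_k$ has entries $O(\delta)$, although $R_{k+1}-I$ and $S_k-I$ individually have entries only of order $\xi$. This is where one invokes the Paige-type recurrences for the orthogonality-loss matrices $U_{k+1}^{T}U_{k+1}-I_{k+1}$ and $V_k^{T}V_k-I_k$, obtained by multiplying \eqref{3.8}--\eqref{3.9} on the left by $U_{k+1}^{T}$ and $V_k^{T}$ and exploiting symmetry: they express the commutators that arise from the congruence $R_{k+1}(\cdot)S_k^{-1}$ in terms of the $\alpha_{k+1}v_{k+1}$ coupling (a rank-one term confined to the boundary), the bidiagonal entries of $B_k$, and the $O(\delta)$ roundoff terms. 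Combined with local orthogonality \eqref{3.4}, this yields the cancellation that reduces the naive $O(\xi)$ entry-wise defect to $O((2k+1)\xi^2)=O(\delta)$, the threshold in \eqref{4.3} being precisely what makes $(2k+1)\xi^2$ of order $\delta$.

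The main obstacle is this last bookkeeping step: adapting the Paige/Simon recurrence analysis to the (nonsymmetric) bidiagonal setting and carrying it through the congruence transformation, while keeping faithful track of the fact that the governing error is $O(\delta)$ rather than $O(\epsilon)$. The latter is the only genuinely new feature relative to \cite{Larsen1998}, and it is harmless in practice because, by Remark \ref{rem3.1}, the JBD process can be arranged so that $\|\underline{B}_k^{-1}\|$, hence $\delta$, stays small. For completeness I would also note that, by Remark \ref{rem3.2}, the hypothesis on $\xi(V_k)$ in \eqref{4.3} is equivalent up to $O(\|\underline{B}_k^{-1}\|^{2}\epsilon^{2})$ to one on $\xi(\widetilde{V}_k)$, so that \eqref{4.3} can be enforced by reorthogonalizing only the $u_i$ and the $\tilde{v}_i$.
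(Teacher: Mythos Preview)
Your proposal is correct and takes essentially the same approach as the paper: the paper simply states that Theorem~\ref{thm4.1} follows from Theorem~\ref{thm3.1} together with \cite[Theorem~5]{Larsen1998}, without giving any further argument. Your write-up actually supplies considerably more detail than the paper does---the QR-factorization manipulation, the role of the Paige-type recurrences, and the $\epsilon\mapsto\delta$ substitution---so there is nothing to correct or compare; if anything, you have filled in steps the paper leaves implicit.
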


%Notice from \eqref{3.29} that $\xi(V_{k})\approx\xi(\widetilde{V}_{k})$
%when $\|\underline{B}_{k}^{-1}\|$ is modest.
Notice that $M_{k+1}^{T}Q_{A}N_{k}$ is precisely the Ritz--Galerkin
projection matrix of $Q_A$ with respect to the left and right subspaces
${\rm span}(U_{k+1})$ and ${\rm span}(V_k)$, whose
singular values are the {\em true}
Ritz values of $Q_A$ with respect to these two subspaces in
exact arithmetic, while the singular values of $B_k$ are the
{\em computed} Ritz values only when semiorthogonality is met.
Theorem \ref{thm4.1} indicates that once
the orthogonality levels of $U_{k+1}$ and $V_{k}$ are below
$(\delta/(2k+1))^{1/2}$, the computed Ritz values are close to those
true ones within $O(\epsilon)$ provided that $\|\underline{B}_k^{-1}\|$
is modest. Since the true Ritz values are never ghosts provided that
no breakdown occurs before iteration $k$,
we avoid the appearance of ghost computed Ritz
values whenever $U_{k+1}$ and $V_k$ have semiorthogonality.
Consequently, as long as true Ritz values are approximations
to some singular values of $Q_A$ with the accuracy
$O(\|\underline{B}_{k}^{-1}\|\epsilon)$, the corresponding computed
Ritz values have the same approximation accuracy too. In
the meantime, it is easily justified that,
when $U_{k+1}$ and $V_k$ have full orthogonality
levels $O(\epsilon)$, Theorem \ref{thm4.1} holds with the norm of
the error matrix in the right-hand side still being
$O(\|\underline{B}_{k}^{-1}\|\epsilon)$. Therefore,
the semiorthogonality of $U_{k+1}$ and $V_k$ suffices for
computing generalized singular values accurately.
We have made a detailed investigation on
the JBD process with semiorthogonalization strategy and proposed
an efficient partial reorthogonalization strategy in \cite{JiaLi2021}.

%\begin{remark}\label{rem4.2}
There is a corresponding counterpart of Theorem \ref{thm4.1}
for $\bar{B}_{k}$, as stated blow.

\begin{theorem}\label{thm4.5}
Let $\hat{\delta} =O(c_{4}(m,n,p,k)\epsilon)$ with $c_{4}(m,n,p,k)$
defined by \eqref{c4},
and the compact QR factorizations of $\widehat{U}_{k}$ and $V_{k}$
be $\widehat{U}_{k}=\widehat{M}_{k}\widehat{R}_{k}$ and
$V_{k}=N_{k}S_{k}$, where
the diagonals of $\widehat{R}_{k}$ and $S_{k}$ are
positive. If $\widehat{U}_{k}$ and $\widehat{V}_{k}$ satisfy the
semiorthogonality
$$ \xi(\widehat{U}_{k}), \  \xi(V_{k}) \leqslant \sqrt{\hat{\delta}/(2k+1)},$$
then
$$
\widehat{M}_{k}^{T}Q_{L}N_{k}=\bar{B}_{k}+\bar{E}_{k},
$$
where the elements of $\bar{E}_{k}$ are
$O(\hat{\delta})=O((\|\underline{B}_k^{-1}\|+\|\bar{B}_k^{-1}\|)\epsilon)$ in size.	
\end{theorem}

Comparing Theorem~\ref{thm4.5} with Theorem~\ref{thm4.1}, we find
that Theorem~\ref{thm4.5} requires that $\|\underline{B}_k^{-1}\|$
and $\|\bar{B}_k^{-1}\|$ be controllable, stronger than the requirement of
Theorem~\ref{thm4.1}.

%%%%%%%%%%%%%%%%%%%%%%%%%%%%%%%%%%%%%%%%%%%%%%%%%%%%%%%%%%%%%%%%%%%%%%
\subsection{Residual norm and stopping criterion}

Now we concentrate on designing an effective and efficient stopping criterion
for the GSVD computation based on the JBD process.
Still, we only assume rounding errors in the JBD process, and suppose that the
other computations are exact.
The following analysis focuses on computing some extreme GSVD components
of $\{A,L\}$ using the SVD of $B_{k}$.

From the GSVD \eqref{2.2} of $\{A,L\}$,  it is known that
the generalized eigenvalues $\lambda$ of the symmetric generalized
eigenvalue problem $A^{T}Ax = \lambda L^{T}Lx$ are
$$\underbrace{\infty, \dots, \infty}_{q}, \ \  \underbrace{(c_{q+1}/s_{q+1})^{2},
\dots, (c_{q+l}/s_{q+l})^{2}}_{l}, \ \ \underbrace{0, \dots, 0}_{t},$$
and the corresponding generalized eigenvectors $x$ are the right generalized
singular vectors $x$ of $\{A, L\}$. The generalized eigenvalue problem
$A^{T}Ax_i = \lambda_i L^{T}Lx_i$ can be equivalently written as $s_i^2A^{T}Ax_i
= c_i^2 L^{T}Lx_i$.
Based on this equivalence, Zha in \cite{Zha1996}
uses the residual norm
\begin{equation}\label{resnorm}
\|r_{i}^{(k)} \| = \| ((s_{i}^{(k)})^{2}A^{T}A-(c_{i}^{(k)})^{2}L^{T}L)x_{i}^{(k)}\|
\end{equation}
to design a stopping criterion for an approximate generalized singular value
pair $\{c_{i}^{(k)}, s_{i}^{(k)}\}$ and the corresponding right vector
$x_{i}^{(k)}$, where $(c_{i}^{(k)})^{2}+(s_{i}^{(k)})^2=1$.
Clearly, the computation of $\|r_i^{(k)}\|$ is expensive since
it needs to compute $x_i^{(k)}$ explicitly by solving
a large scale least squares problem like (\ref{rightsv}) at each iteration
$k$ until the convergence occurs, where $w_i^{(k)}$ depends on
the approach used in the JBD method; see Section~\ref{sec4.1}.
In exact arithmetic, Zha \cite{Zha1996} has proved a sharp bound:
\begin{equation}\label{4.6}
\|r_{i}^{(k)} \| \leq \| R \|\alpha_{k+1}\beta_{k+1}|e_{k}^{T}w_{i}^{(k)}|
\end{equation}
with $R$ defined in \eqref{1.1},
so that $\|R\|\alpha_{k+1}\beta_{k+1}|e_{k}^{T}w_{i}^{(k)}|$
can be used to design a stopping criterion if $\|R\|$ or its reasonable estimate
is available. From \eqref{1.1}, for $C=(A^T,L^T)^T$ we have
$\| R \| = \|C\|=\sigma_{\max}(C)$, the largest singular value of $C$. Therefore,
$\frac{\|r_{i}^{(k)} \|}{\|R\|}$ can be regarded as a relative residual norm
of the approximate eigenvalue $(c_i^{(k)}/s_i^{(k)})^2$ and eigenvector $x_i^{(k)}$ of the generalized eigenvalue problem $s_i^2A^TA_i=c_i^2L^TLx_i$.
In finite precision, we can
obtain the following upper bound for $\|r_{i}^{(k)}\|$.
\begin{theorem}\label{thm4.2}
Suppose that the inner least squares problem \eqref{2.3} is solved accurately
and that $x_i^{(k)}=R^{-1}V_kw_i^{(k)}$ with $w_i^{(k)}$ being the right singular
vector of $B_k$ corresponding to its singular value $c_i^{(k)}$.
Then in finite precision it holds that
\begin{equation}\label{4.7}
\big\| [(s_{i}^{(k)})^{2}A^{T}A-(c_{i}^{(k)})^{2}L^{T}L]x_{i}^{(k)}\big\| \leq\| R\|
\left(\alpha_{k+1}\beta_{k+1}|e_{k}^{T}w_{i}^{(k)}|+O(\|\underline{B}_{k}^{-1}\|\epsilon)
\right).
\end{equation}
\end{theorem}
\begin{proof}
From \eqref{3.26}, we have	
$$ Q_{A}^{T}Q_{A}V_{k} = V_{k}B_{k}^{T}B_{k}+\alpha_{k+1}\beta_{k+1}v_{k+1}e_{k}^{T}+G_{k+1}B_{k}+
Q_{A}^{T}F_{k} .$$	
From \eqref{4.1}, we have
$$B_{k}^{T}B_{k}=W_{k}\Theta_{k}^{2}W_{k}^{T}.$$
Notice that $(s_{i}^{(k)})^{2}=1-(c_{i}^{(k)})^{2}$ and $x_{i}^{(k)}=R^{-1}V_{k}w_{i}^{(k)}$.
Using the above two relations and \eqref{1.1}, we obtain
\begin{align}
[(s_{i}^{(k)})^{2}A^{T}A-(c_{i}^{(k)})^{2}L^{T}L]x_{i}^{(k)}
&= [A^{T}A-(c_{i}^{(k)})^{2}(A^{T}A+L^{T}L)]R^{-1}V_{k}W_{k}e_{i} \notag\\
&= R^{T}[Q_{A}^{T}Q_{A}V_{k}W_{k}-(c_{i}^{(k)})^{2}V_{k}W_{k}]e_{i} \notag\\
&=  R^{T}[\alpha_{k+1}\beta_{k+1}v_{k+1}e_{k}^{T}w_{i}^{(k)}+(G_{k+1}B_{k}+
Q_{A}^{T}F_{k})w_{i}^{(k)}], \label{res}
\end{align}
where $e_{i}$ is the $i$-th column of $I_k$.
From Theorem \ref{thm3.1}, we have
$$
\|(G_{k+1}B_{k}+Q_{A}^{T}F_{k})w_{i}^{(k)} \|=O(\|\underline{B}_{k}^{-1}\|\epsilon).
$$
Therefore, by taking norms in \eqref{res}, we prove the desired result.
\end{proof}

Using the same proof approach and based on Theorem~\ref{thm3.3},
it is straightforward to justify that
this theorem still holds when we use the SVD of $\bar{B}_k$ to compute
the approximate generalized singular value $\{(1-(\bar{s}_i^{(k)})^2)^{1/2},
\bar{s}_i^{(k)}\}$ and approximate right generalized singular vector
$x_i^{(k)}=R^{-1}V_kw_i^{(k)}$ with $w_i^{(k)}$ being the
right singular vector of $\bar{B}_k$ corresponding to the singular value
$\bar{s}_i^{(k)}$. Exploiting Lemma~\ref{lem3.1} and \eqref{3.1}--\eqref{3.3},
by a more involved analysis, we can prove
that Theorem~\ref{thm3.3} holds too when we use the GSVD of $\{B_k,\bar{B}_k\}$
to compute the approximate generalized
singular value $\{c_i^{(k)},s_i^{(k)}\}$ and right generalized singular
vector. We omit details here.

Let $(A^T,L^T)^T=C$, and recall the QR factorization \eqref{1.1} of
$C$. We have $\|R\|=\|C\|$.
If we perform Lanczos bidiagonalization on $C$,
several steps, then the largest Ritz value is a reasonably good lower bound for
$\sigma_{1}(C)$. However, we should remind that
only a roughly good upper bound for $\|C\|$ suffices for our use here.
Notice
$$
\|R\|^2=\|C\|^2=\|C^TC\|\leq \|A^TA\|+\|L^TL\|=\|A\|_1\|A\|_{\infty}
+\|L\|_1\|L\|_{\infty},
$$
where $\|\cdot\|_1$
and $\|\cdot\|_{\infty}$ denote the 1-norm and the infinity norm,
which is cheap to compute when $A$ and $L$ are explicitly stored
in a certain sparse format. We then take the square root of the right-hand
side as an estimate for $\|R\|$. Alternatively, it is simpler to
use $\|C\|_1\leq \|A\|_1+\|L\|_1$ or
$\|C\|_{\infty}=\max\{\|A\|_{\infty},\|L\|_{\infty}\}$
as a replacement of $\|R\|$. Since
$e_{k}^{T}w_{i}^{(k)}$ is available from the SVD of $B_{k}$, the quantity
$|\| R\|\alpha_{k+1}\beta_{k+1}|e_{k}^{T}w_{i}^{(k)}|$ can be used
as a reliable stopping criterion, provided that the stopping tolerance for
the (relative) residual norm is not required to achieve
the level of $\epsilon$. Computationally,
we benefit very much from this criterion since we avoid the explicit
computation of $x_i^{(k)}$ before the convergence.
We will numerically confirm the reliability of the criterion.

%%%%%%%%%%%%%%%%%%%%%%%%%%%%%%%%%%%%%%%%%%%%%%%%%%%%%%%%%%%%%%%%%%%%%%
\section{Numerical experiments}\label{sec5}

We report numerical experiments to justify the results
obtained. All the numerical experiments were
performed on an Intel (R) Core (TM) i7-7700 CPU 3.60GHz
with the main memory 8GB using the Matlab R2017a with the
machine precision $\epsilon = 2.22 \times 10^{-16}$ under the
Miscrosoft Windows 10 64-bit system. For each matrix pair $\{A, L\}$,
we use $b=(1,\dots,1)^{T}\in\mathbb{R}^{m}$ as the starting vector of the JBD process,
and each inner least squares problem \eqref{2.3} is
solved accurately by computing the QR factorization \eqref{1.1}
and $QQ^{T}w$ for a given $w$.

\subsection{Examples for the JBD process in finite precision}

We choose four matrix pairs to confirm the numerical behavior of
the JBD process in finite precision.
We construct the first pair $\{A_c,L_s\}$ as follows:
Take $n=800$ and $C_A=\diag(c)$, $S_L=\diag(s)$, where $c=(\dfrac{3n}{2}, \dfrac{3n}{2}-1, \dots,
\dfrac{n}{2}+1)/2n$ and $s = (\sqrt{1-c_{1}^{2}}, \dots, \sqrt{1-c_{n}^{2}})$.
Let $D$ be the symmetric orthogonal matrix generated by the MATLAB built-in function
$\texttt{D=gallery(`orthog',n,2)}$. We then define $A=C_AD$ and $L=S_LD$. By construction,
the $i$-th generalized singular value of $\{A, L\}$ is $\{c_{i},s_{i}\}$
and the corresponding right vector $x_{i}$ is the $i$-th column of $D$,
and the left generalized singular vectors $p_i^A$ and $p_i^L$ are
the $i$-th column $e_i$ of $I_n$,
$i = 1, \dots, n$. The remaining three pairs use sparse matrices
from \cite{Davis2011}, where
\begin{equation}\label{l1}
L=L_1 = \left(
\begin{array}{ccccc}
1 & -1 &  &  &  \\
& 1 & -1 &  &  \\
&  & \ddots & \ddots &  \\
&  &  & 1  & -1\\
\end{array}
\right)\in \mathbb{R}^{(n-1)\times n}
\end{equation}
with $n=712$, which is the scaled discrete approximation of the first order derivative
operator, and $L=L_{n}=\diag(l)$ with $l=(2n, 2n-1, \dots, n+1)/1000$, $n=3969$.
Some properties of the four test matrix pairs are described in Table \ref{tab1},
where $\kappa(A)=\|A\|\|A^{\dagger}\|$ is the condition number of $A$.

%% The description of test problems.
\begin{table}[htp]
	\centering
	\caption{Properties of the test matrices.}
	\begin{tabular}{|l|l|l|l|l|l|}
		\hline
		$A$	 &$m\times n$		 &$\kappa(A)$	  &$L$	    &$p\times n$    &$\kappa(L)$
		\\  \hline
		{\sf $A_{c}$}	 &$800\times 800$	&2.99 &{\sf $L_{s}$} &$800\times 800$  &1.46 	\\  \hline
		{\sf well1850}  &$1850\times 712$   &111.31  &{\sf $L_{1}$}  &$711\times 712$   &453.27
		\\ \hline
		{\sf rdb2048}    &$2048\times 2048$  &2026.80   &{\sf dw2048}   &$2048\times 2048$ &5301.50 \\  \hline
		{\sf c-23} &$3969\times 3969$ &22795.9  &{\sf $L_{n}$}   &$3969\times 3969$ & 1.9995  \\  \hline
	\end{tabular}
	\label{tab1}
\end{table}

\begin{figure}[htp]
	\begin{minipage}{0.48\linewidth}
		\centerline{\includegraphics[width=6.0cm,height=4cm]{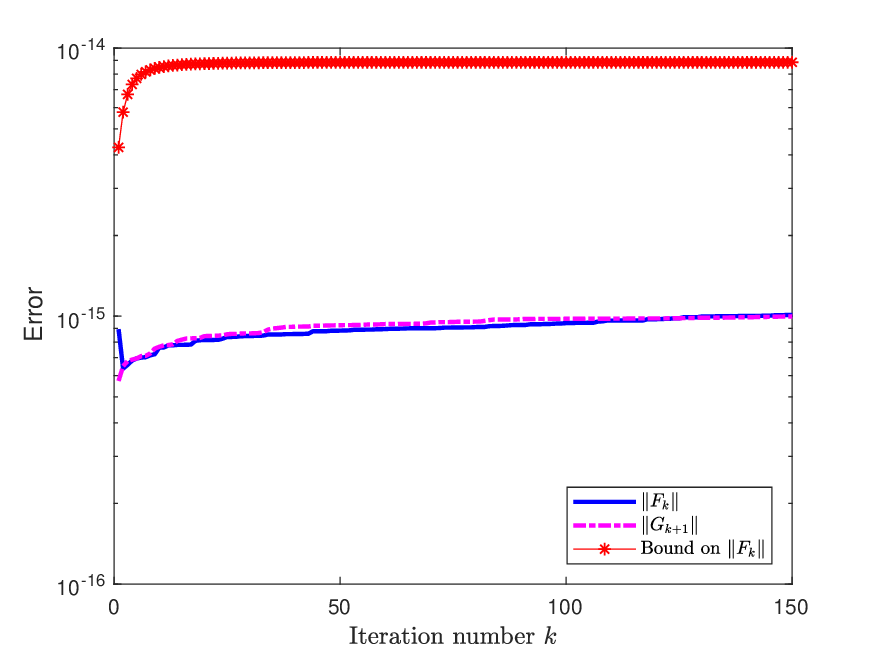}}
		\centerline{(a)}
	\end{minipage}
	\hfill
	\begin{minipage}{0.48\linewidth}
		\centerline{\includegraphics[width=6.0cm,height=4cm]{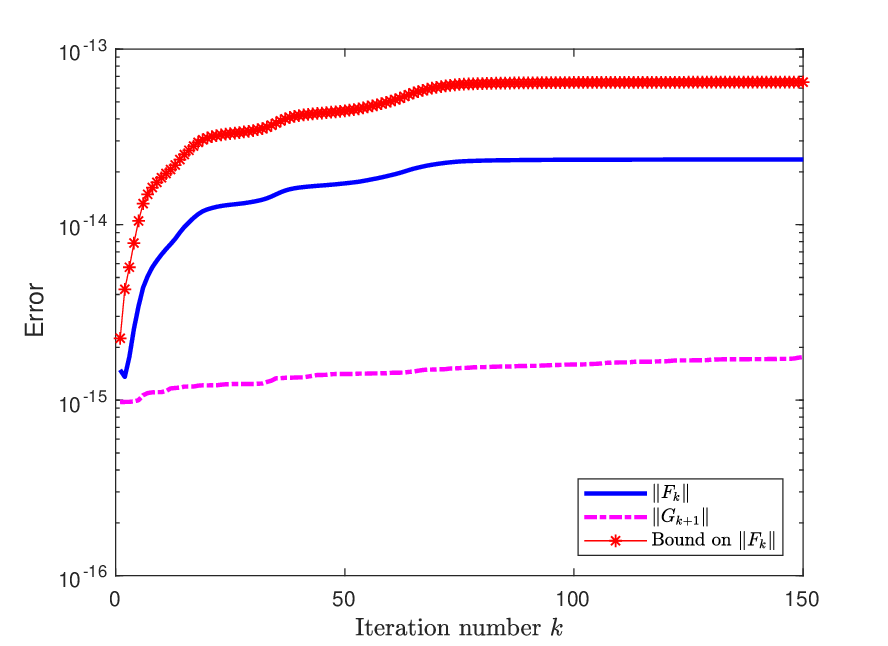}}
		\centerline{(b)}
	\end{minipage}
	
	\vfill
	\begin{minipage}{0.48\linewidth}
		\centerline{\includegraphics[width=6.0cm,height=4cm]{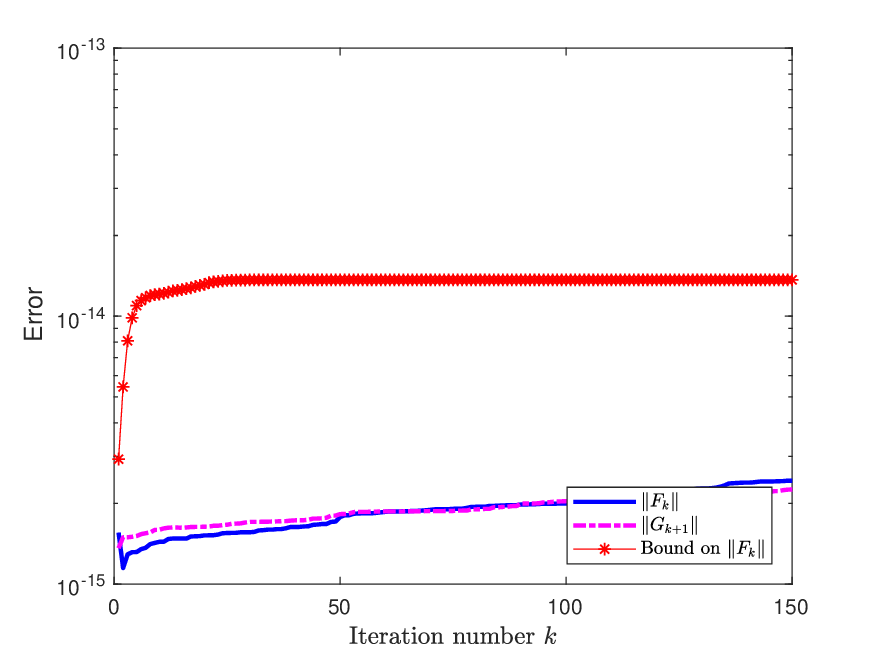}}
		\centerline{(c)}
	\end{minipage}
	\hfill
	\begin{minipage}{0.48\linewidth}
		\centerline{\includegraphics[width=6.0cm,height=4cm]{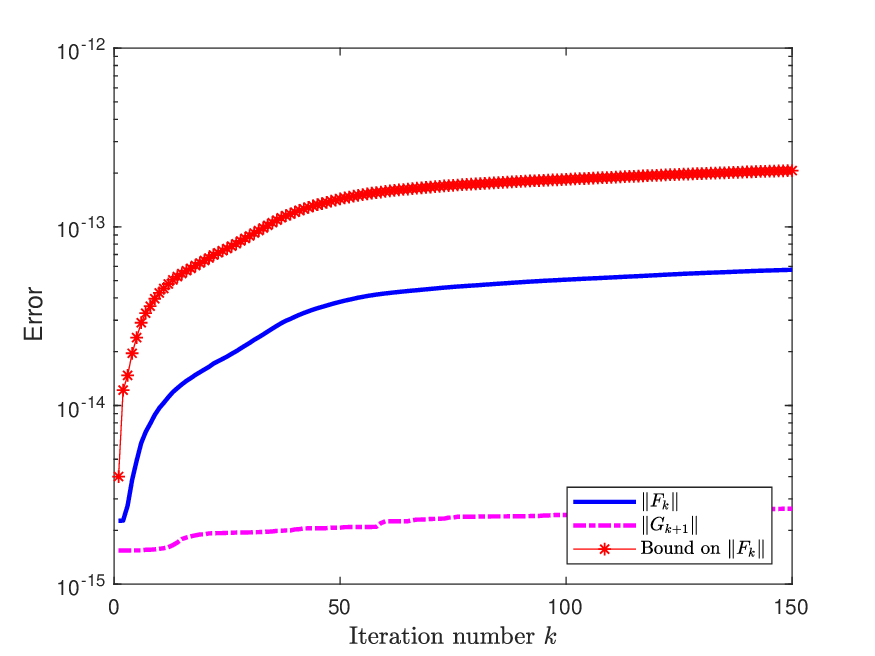}}
		\centerline{(d)}
	\end{minipage}
	\caption{ Estimated error bound for $\|F_{k}\|$: (a) {\sf \{$A_{c}$, $L_{s}$\}};
(b) {\sf \{well1850, $L_{1}$\}}; (c) {\sf \{rdb2048, dw2048\}}; (d) {\sf \{c-23, $L_{n}$\}}.}
	\label{fig1}
\end{figure}

Figure \ref{fig1} depicts the growths of $\|{F}_{k}\|$ and $\|G_{k+1}\|$
in \eqref{3.10} and \eqref{3.11} as the iteration number $k$ increases
from $1$ to $150$.  By Theorem \ref{thm3.1},
we take $O(\lVert \underline{B}_{k}^{-1}\lVert\epsilon)=10\lVert
\underline{B}_{k}^{-1}\lVert\epsilon$. For the four test problems,
it is seen from Figure \ref{fig1}(a)--Figure~\ref{fig1}(d)
that $\|F_{k}\|$ grows
very slowly as $k$ increases. For the four matrix pairs,
$O(\lVert \underline{B}_{k}^{-1}\lVert\epsilon)$ is indeed a very good upper
bound for $\|F_{k}\|$ within ten times, and the growth trends of $\|F_{k}\|$ and
$\lVert \underline{B}_{k}^{-1}\lVert$ are similar.
This indicates that the growth of $\|F_{k}\|$ is mainly affected
by the growth of $\lVert \underline{B}_{k}^{-1}\lVert$.
Since $QQ^{T}\tilde{u}_{i}$ is explicitly computed at each step in our
experiments, $\|G_{k+1}\|=O(\epsilon)$ remains almost unchanged.

\begin{figure}[htp]
	\begin{minipage}{0.48\linewidth}
		\centerline{\includegraphics[width=6.0cm,height=4cm]{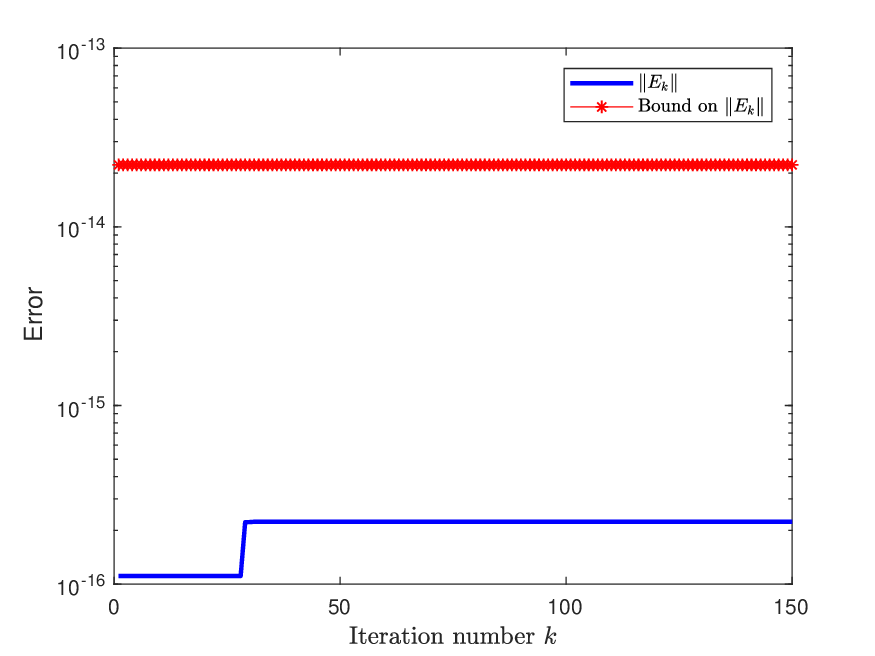}}
		\centerline{(a)}
	\end{minipage}
	\hfill
	\begin{minipage}{0.48\linewidth}
		\centerline{\includegraphics[width=6.0cm,height=4cm]{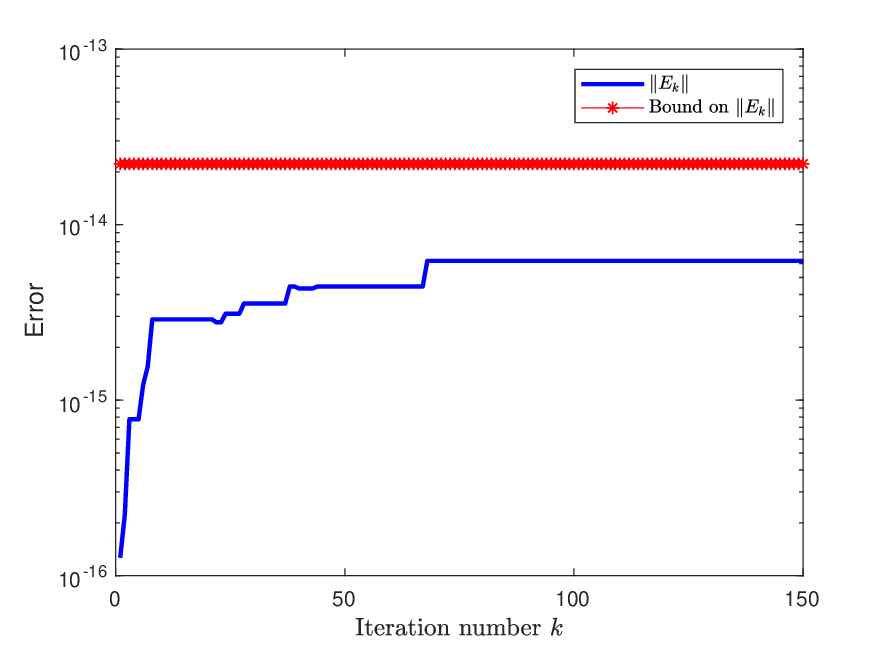}}
		\centerline{(b)}
	\end{minipage}	
	\vfill
	\begin{minipage}{0.48\linewidth}
		\centerline{\includegraphics[width=6.0cm,height=4cm]{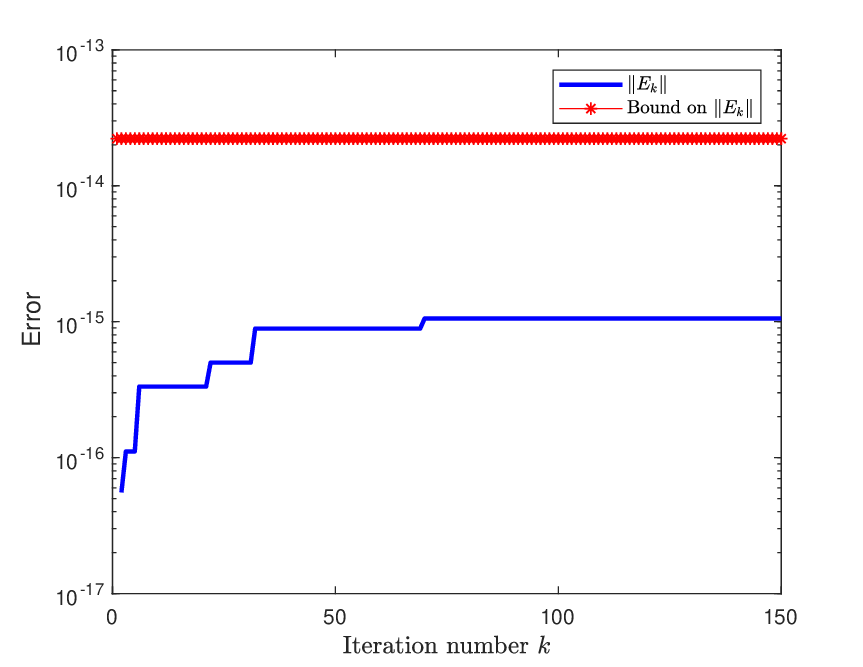}}
		\centerline{(c)}
	\end{minipage}
	\hfill
	\begin{minipage}{0.48\linewidth}
		\centerline{\includegraphics[width=6.0cm,height=4cm]{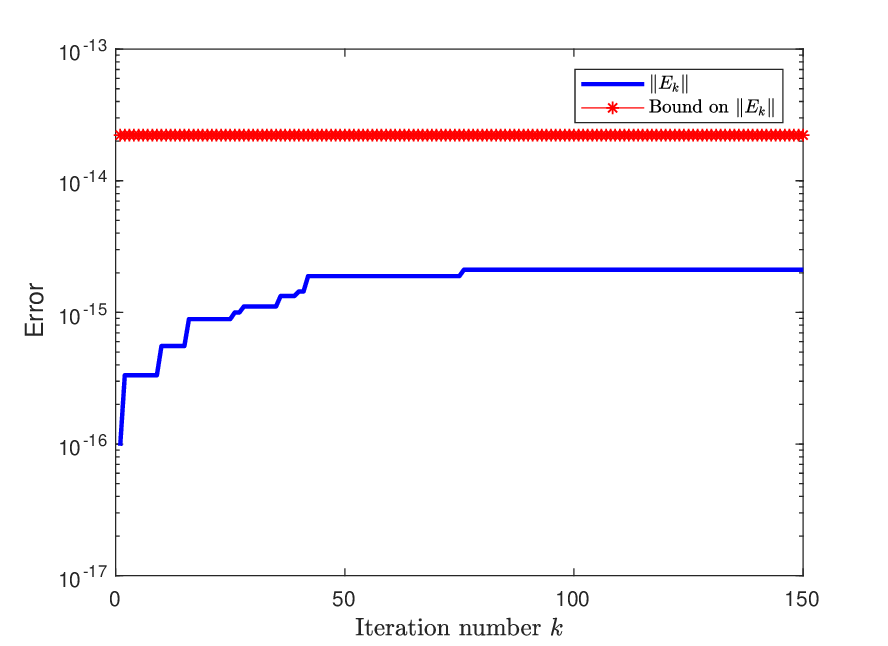}}
		\centerline{(d)}
	\end{minipage}
	\caption{Error bound for $\|I_k-B_k^TB_k-\bar{B}_k\bar{B}_k\|$: (a) {\sf \{$A_{c}$, $L_{s}$\}};
		(b) {\sf \{well1850, $L_{1}$\}}; (c) {\sf \{rdb2048, dw2048\}};
(d) {\sf \{c-23, $L_{n}$\}}.}
	\label{fig2}
\end{figure}

Figure \ref{fig2} depicts the growth of
$\|E_{k}\|=\|I_{k}-B_{k}^{T}B_{k}-\bar{B}_{k}^{T}\bar{B}_{k}\|$.
Since the nonzero elements of $E_{k}$ are $O(c_{3}(m,n,p)\epsilon)$ in size,
we use $100\epsilon$ as an upper bound for $\|E_{k}\|$.
For the four matrix pairs, we find that as $k$
increases from $1$ to $150$, $\|E_{k}\|$ is always at
the level of $\epsilon$ and the bound estimates it quite well,
justifying \eqref{3.18}.

\begin{figure}[htp]
	\begin{minipage}{0.48\linewidth}
		\centerline{\includegraphics[width=6.0cm,height=4cm]{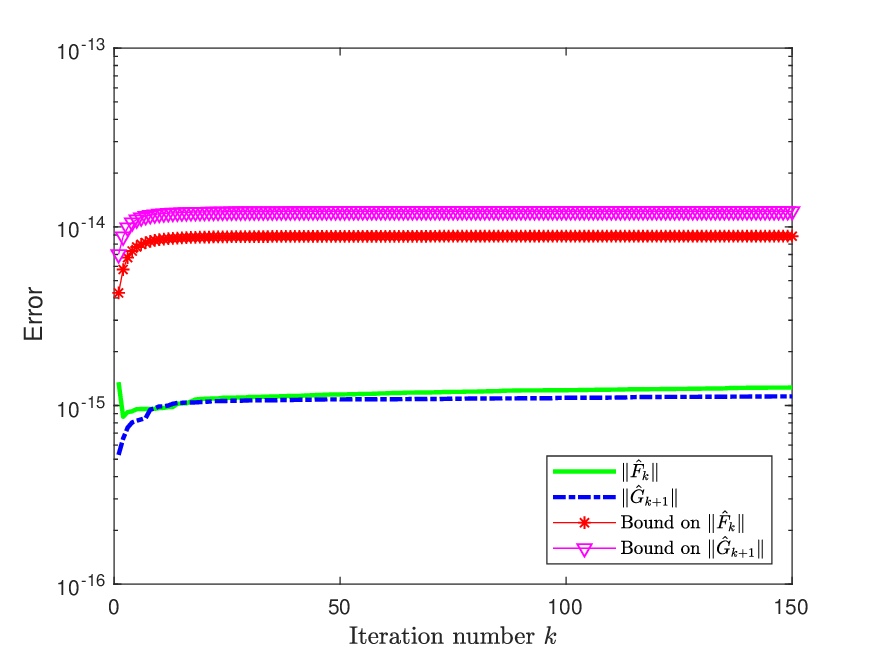}}
		\centerline{(a)}
	\end{minipage}
	\hfill
	\begin{minipage}{0.48\linewidth}
		\centerline{\includegraphics[width=6.0cm,height=4cm]{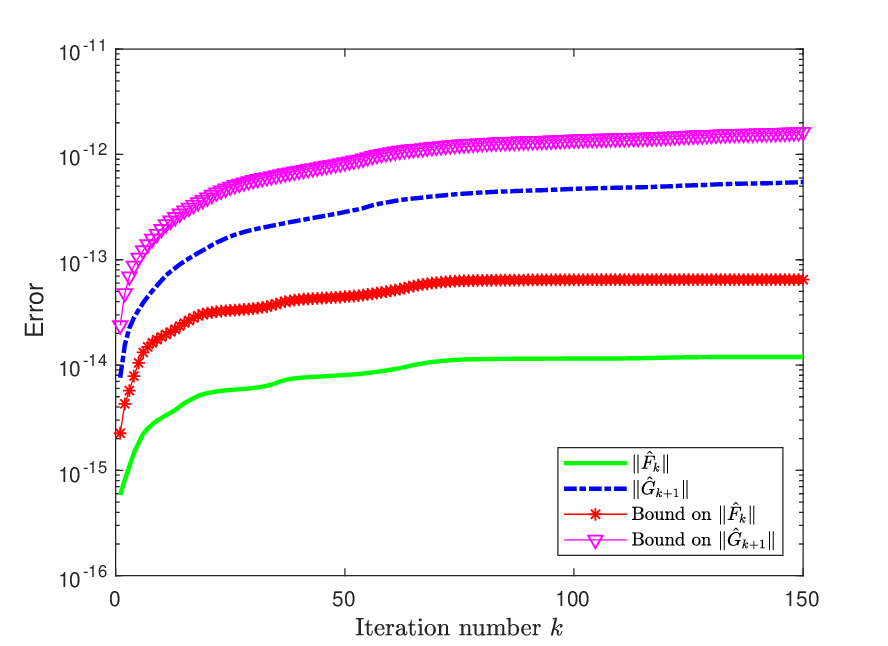}}
		\centerline{(b)}
	\end{minipage}	
	\vfill
	\begin{minipage}{0.48\linewidth}
		\centerline{\includegraphics[width=6.0cm,height=4cm]{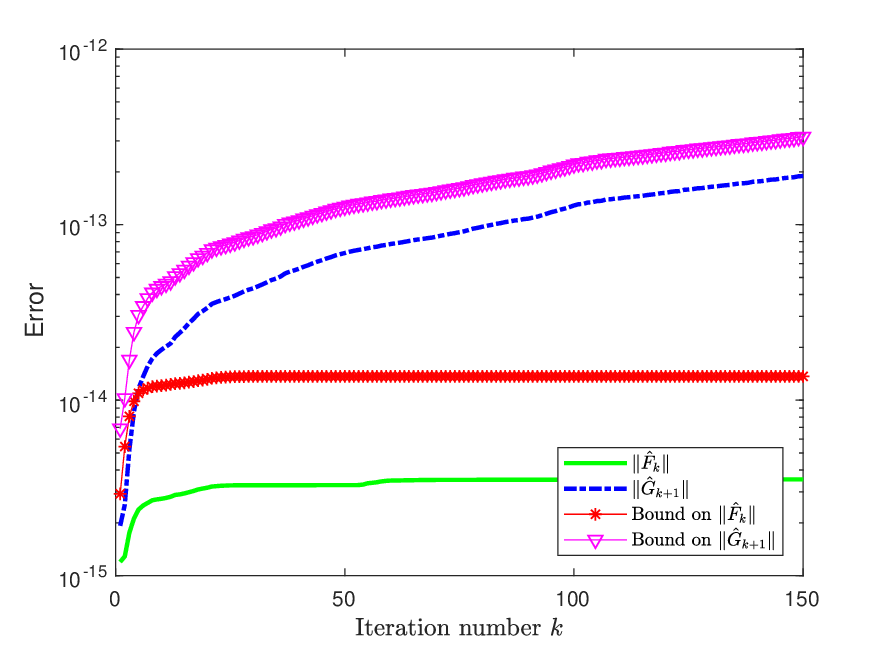}}
		\centerline{(c)}
	\end{minipage}
	\hfill
	\begin{minipage}{0.48\linewidth}
		\centerline{\includegraphics[width=6.0cm,height=4cm]{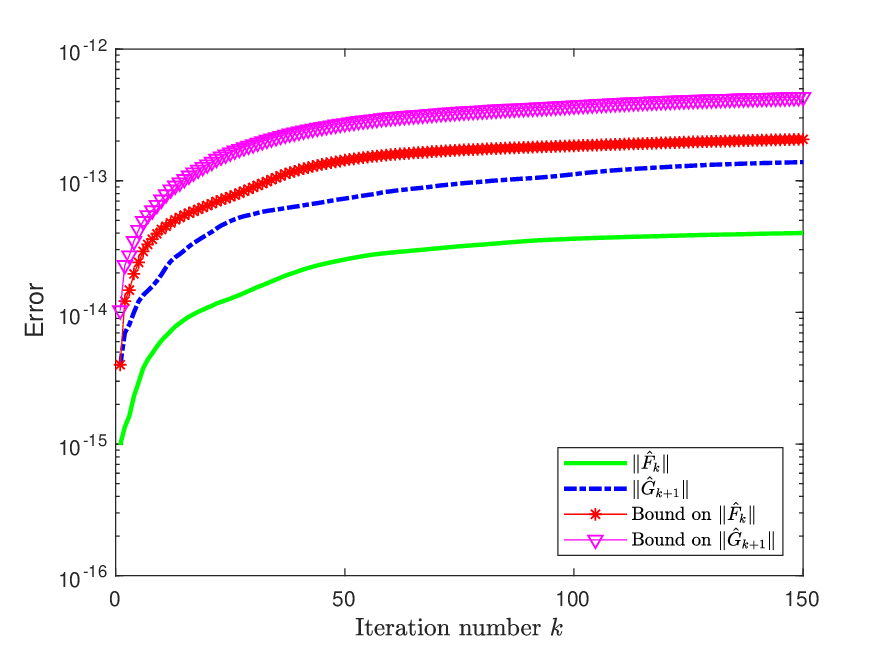}}
		\centerline{(d)}
	\end{minipage}
	\caption{ Estimated error bounds for $\|\widehat{F}_{k}\|$ and $\|\widehat{G}_{k}\|$:
(a) {\sf \{$A_{c}$, $L_{s}$\}}; (b) {\sf \{well1850, $L_{1}$\}};
(c) {\sf \{rdb2048, dw2048\}}; (d) {\sf \{c-23, $L_{n}$\}}.}
	\label{fig3}
\end{figure}

Figure \ref{fig3} depicts the growths of $\|\widehat{F}_{k}\|$ and $\|\widehat{G}_{k}\|$
in \eqref{3.23} and \eqref{3.24}. By Theorem \ref{thm3.3},
we take $O(\lVert \underline{B}_{k}^{-1}\lVert\epsilon)=10\lVert \underline{B}_{k}^{-1}\lVert\epsilon$ and
$O((\lVert \underline{B}_{k}^{-1}\lVert + \|\widehat{B}_{k}^{-1}\|)\epsilon)
=10(\lVert \underline{B}_{k}^{-1}\lVert + \|\widehat{B}_{k}^{-1}\|)\epsilon$,
respectively.
From the figures, we see that $O(\lVert \underline{B}_{k}^{-1}\lVert\epsilon)$
and $O((\lVert \underline{B}_{k}^{-1}\lVert + \|\widehat{B}_{k}^{-1}\|)\epsilon)$
are indeed reasonable upper bounds for $\|\widehat{F}_{k}\|$ and $\|\widehat{G}_{k}\|$,
and the growths of $\|\widehat{F}_{k}\|$
and $\|\widehat{G}_{k}\|$ are critically affected by those
of $\lVert \underline{B}_{k}^{-1}\lVert$ and
$\lVert \underline{B}_{k}^{-1}\lVert + \|\widehat{B}_{k}^{-1}\|$,
respectively. For the four matrix pairs,
$\lVert \underline{B}_{k}^{-1}\lVert$ always grows slowly, but
$\|\widehat{B}_{k}^{-1}\|=\|\bar{B}_k^{-1}\|$ grows faster
for \{well1850, $L_{1}$\} than for the other three pairs. This
is because $L_1$ is truly flat and the smallest
singular value of $\bar{B}_k$ converges to zero as $k$ increases,
causing that $\|\bar{B}_k^{-1}\|$ is ultimately very large,
as we have shown in almost one page after Theorem~\ref{thm3.1}; in
contrast, the smallest
singular value of $\bar{B}_k$ converges to the nonzero smallest
one of $L$ for the other three matrix pairs, and
$\|\bar{B}_k^{-1}\|$ is uniformly bounded by the reciprocal of
the smallest singular value of $L$.

\begin{figure}[htp]
	\begin{minipage}{0.48\linewidth}
		\centerline{\includegraphics[width=6.0cm,height=4cm]{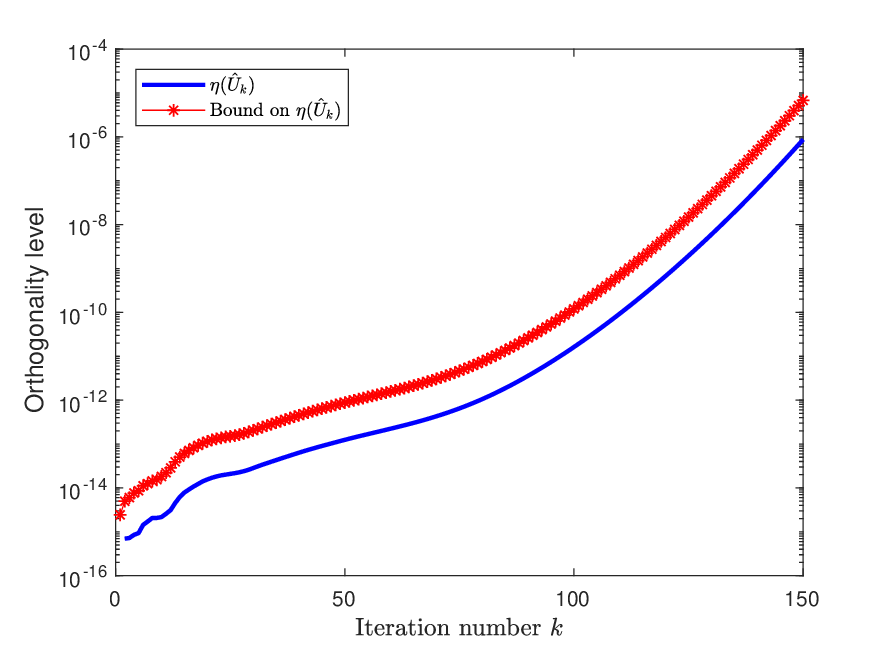}}
		\centerline{(a)}
	\end{minipage}
	\hfill
	\begin{minipage}{0.48\linewidth}
		\centerline{\includegraphics[width=6.0cm,height=4cm]{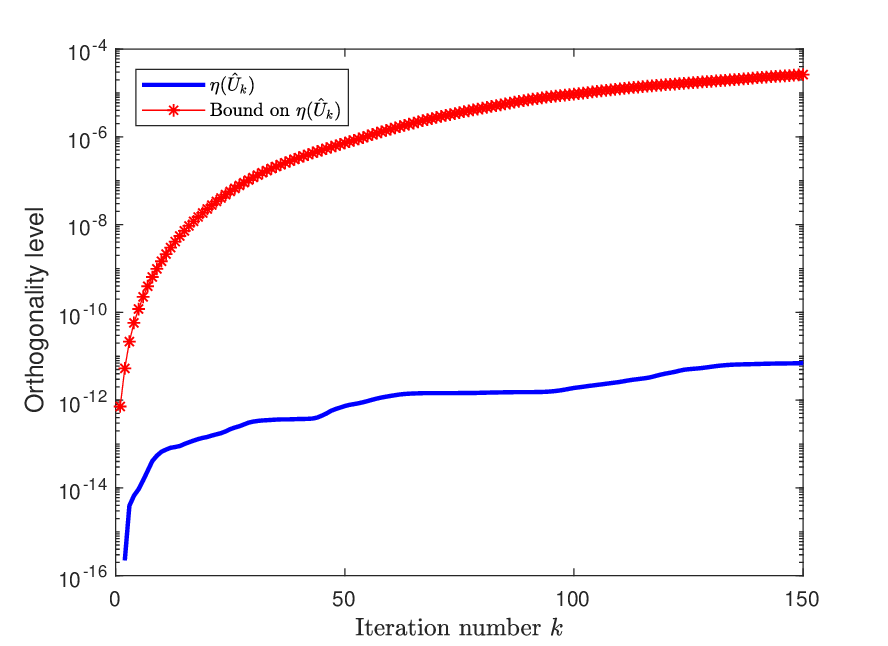}}
		\centerline{(b)}
	\end{minipage}
	
	\vfill
	\begin{minipage}{0.48\linewidth}
		\centerline{\includegraphics[width=6.0cm,height=4cm]{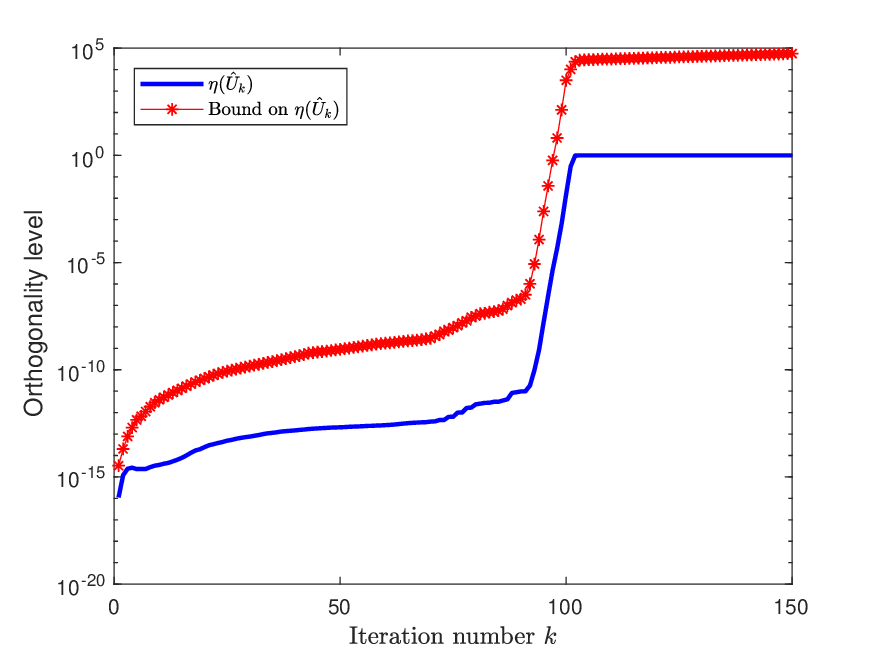}}
		\centerline{(c)}
	\end{minipage}
	\hfill
	\begin{minipage}{0.48\linewidth}
		\centerline{\includegraphics[width=6.0cm,height=4cm]{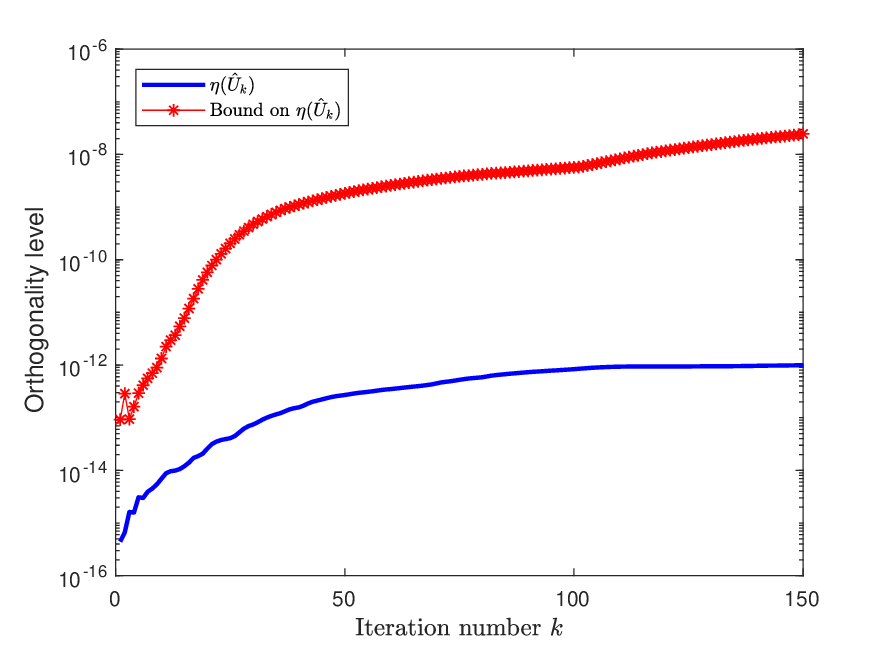}}
		\centerline{(d)}
	\end{minipage}
	\caption{Orthogonality level of $\widehat{U}_{k}$: (a) {\sf \{$A_{c}$, $L_{s}$\}};
(b) {\sf \{well1850, $L_{1}$\}}; (c) {\sf \{rdb2048, dw2048\}}; (d) {\sf \{c-23, $L_{n}$\}}.}
	\label{fig4}
\end{figure}

Figure \ref{fig4} depicts the orthogonality level
of $\widehat{U}_{k}$ measured by $\eta(\widehat{U}_{k})$ as $k$
increases from $1$ to $150$. The upper bound for
$\eta(\widehat{U}_{k})$ is \eqref{3.27}, and
we use $10\epsilon$ as an estimate for $O(c_{3}(m,n,p)\epsilon)$. We observe that
the orthogonality of $\widehat{U}_{k}$ will lose gradually. Particularly,
for the test problem {\sf \{rdb2048, dw2048\}}, the columns of
$\widehat{U}_{k}$ lose orthogonality completely and become numerically linearly
dependent after $k=100$. The growth trends of $\eta(\widehat{U}_{k})$ and
its bound resemble, meaning that the
orthogonality level of $\widehat{U}_{k}$ is affected
not only by $\eta(U_{k})$ and $\eta(\tilde{V}_{k})$
but also by $\|\widehat{B}_{k}^{-1}\|$.
%For Figures \ref{fig4}(a) and
%\ref{fig4}(c), the two curves are quite close
%at some points, meaning that our upper bound for $\eta(\widehat{U}_{k})$ is tight.

%%%%%%%%%%%%%%%%%%%%%%%%%%%%%%%%%%%%%%%%%%%%%%%%%%%%%%%%%%%%%%%%
\subsection{Examples for the GSVD computation}

We illustrate the
numerical performance of the JBD method for computing a few extreme
GSVD components of $\{A,L\}$. We will show the
convergence behavior of the computed Ritz values and vectors,
and justify the upper bound in \eqref{4.7}.

\textbf{Example 1.} We show the convergence of the singular values,
the computed Ritz values, of $B_{k}$ or $\bar{B}_{k}$.
Take $m=n=p=500$, and construct a row vector $c=(c_1,\ldots,c_n)$
with
$l_{\max} = 4, l_{\min} = 2,
c_{(1:l_{\max})} = \texttt{linspace}(0.99, 0.7, l_{\max}),
c_{(n-l_{\min}+1:n)} = \texttt{linspace}(0.10, 0.01, l_{\min}),
c_{(l_{\max}+1:n-l_{\min})} = \texttt{linspace}(0.65, 0.15, n-l_{\max}-l_{\min})
$ and
$s = (\sqrt{1-c_1^{2}}, \ldots, \sqrt{1-c_{n}^{2}}),
$
where $\texttt{linspace}$ is the Matlab built-in function. Then define
$C_A=\diag(c)$, $S_L=\diag(s)$ and
$\texttt{D = gallery(`orthog',n,2)}$, and take $A=C_AD$ and $L=S_LD$.
By construction, $\kappa(A)= 6.6000$ and $\kappa(L)=7.0888$,
the $i$-th large generalized singular value pair of $\{A, L\}$ is $\{c_{i},s_{i}\}$.

\begin{figure}[htp]
	\begin{minipage}{0.48\linewidth}
		\centerline{\includegraphics[width=6.0cm,height=4cm]{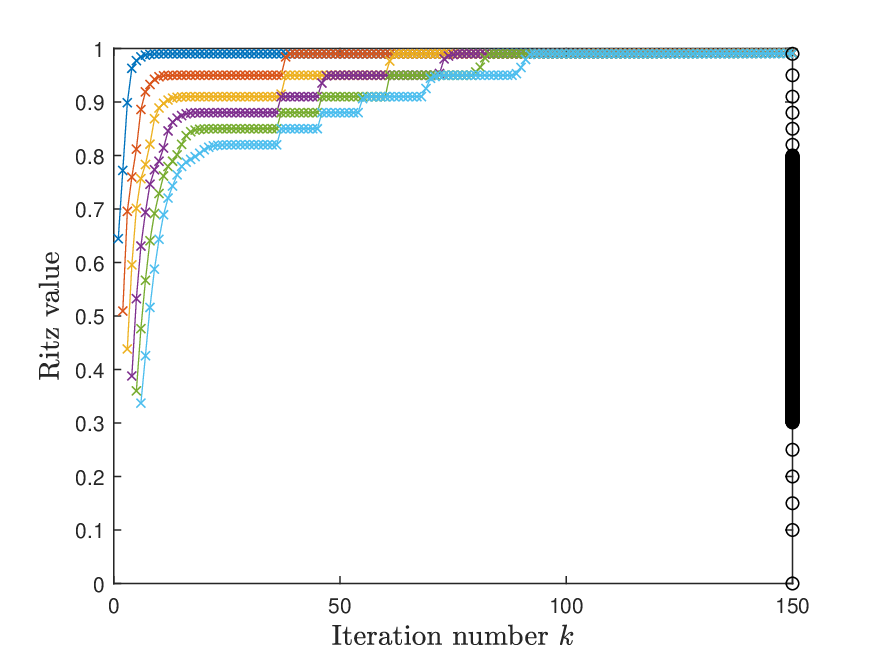}}
		\centerline{(a)}
	\end{minipage}
	\hfill
	\begin{minipage}{0.48\linewidth}
		\centerline{\includegraphics[width=6.0cm,height=4cm]{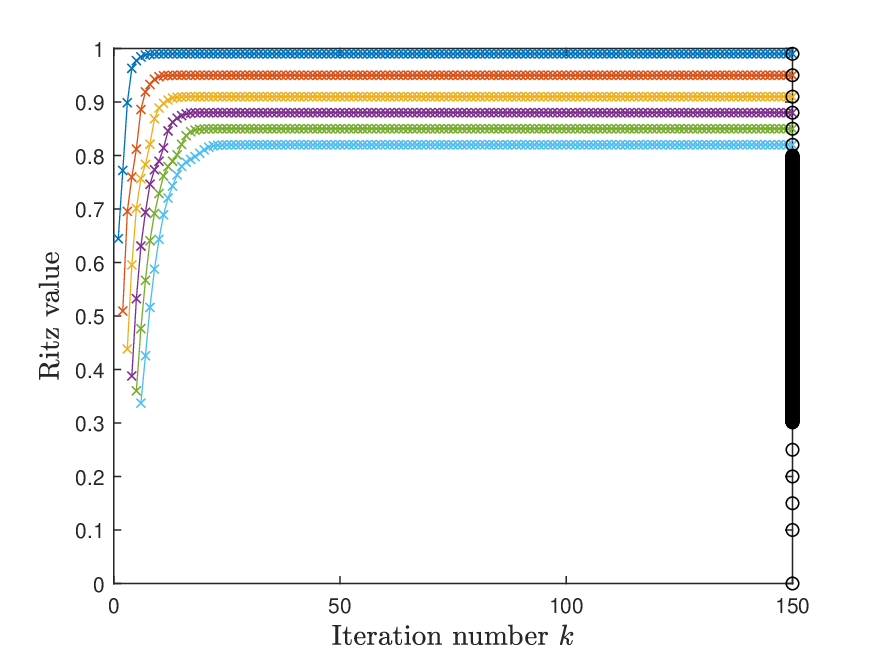}}
		\centerline{(b)}
	\end{minipage}
	
	\vfill
	\begin{minipage}{0.48\linewidth}
		\centerline{\includegraphics[width=6.0cm,height=4cm]{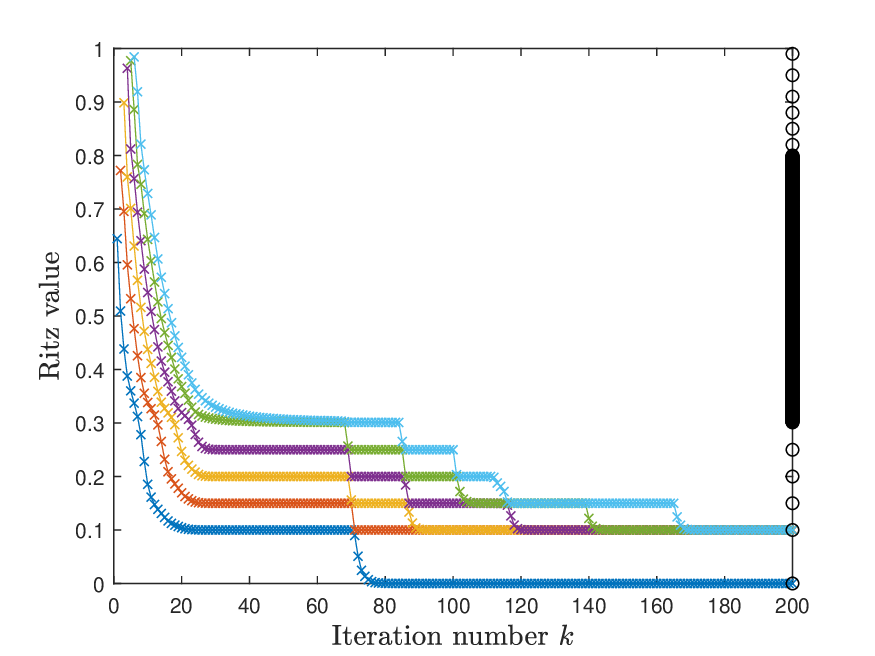}}
		\centerline{(c)}
	\end{minipage}
	\hfill
	\begin{minipage}{0.48\linewidth}
		\centerline{\includegraphics[width=6.0cm,height=4cm]{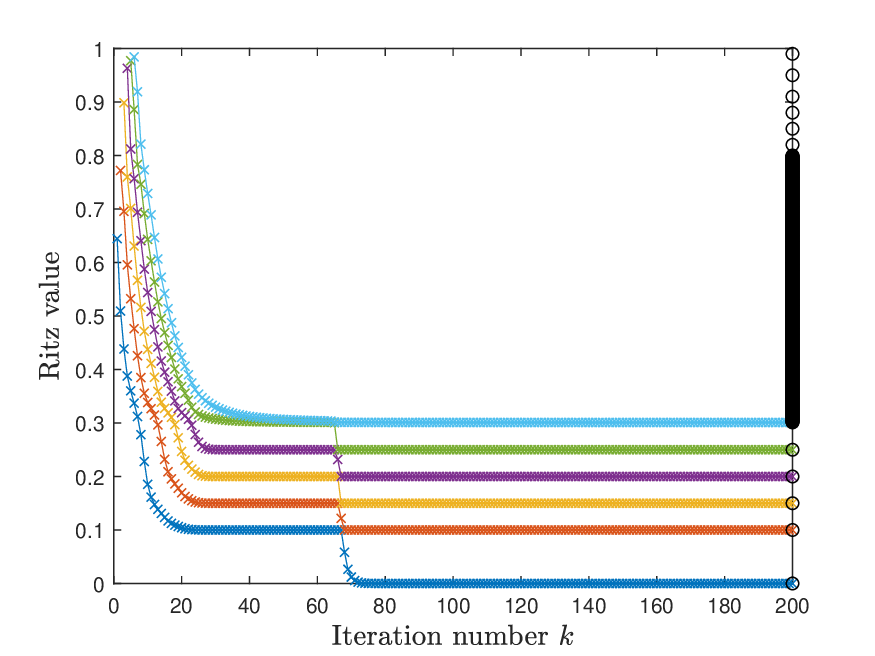}}
		\centerline{(d)}
	\end{minipage}
	\caption{ Convergence of Ritz values computed by the SVD of $B_{k}$:
(a) {the first six largest Ritz values without reorthogonalization};
(b) {the first six largest Ritz values with full reorthogonalization};
(c) {the first six smallest Ritz values without reorthogonalization};
(d) {the first six smallest Ritz values with full reorthogonalization}.}
	\label{fig5}
\end{figure}

Figure \ref{fig5} depicts the convergence processes of the first six largest and
smallest Ritz values computed by the SVD of $B_{k}$, respectively,
where we implemented the JBD process without and with reorthogonalization.
The right vertical line indicates the values of $c_{i}$ for $i=1,\dots, 500$,
and the left and right panels exhibit the convergence behaviors of the JBD method
without and with reorthogonalization.
We observe from Figure~\ref{fig5}(a) and Figure~\ref{fig5}(c)
that some of the converged Ritz values
suddenly jump to become a ghost and then converge to the next large
or small singular values after several iterations. Such a
phenomenon repeats several times and corresponds to spurious copies
each time. More precisely, as Figure~\ref{fig5}(a) indicates,
the six largest Ritz values gradually become numerically
multiple and ultimately
converge to the single largest generalized singular value of $\{A,L\}$
as $k$ increases, of which five ones are ghosts.
Similarly, as Figure~\ref{fig5} (c) shows,
the six smallest Ritz values ultimately converge to the two smallest
generalized singular values of $\{A,L\}$, of which the most slowly converged
Ritz value first converges to the seventh smallest generalized
singular value of $\{A,L\}$, then suddenly jumps and starts to approximate
the smallest one after 69 iterations, and the other five Ritz values
finally become numerically multiple and converge to the second smallest
generalized singular value of $\{A,L\}$ after nearly 170 iterations,
meaning that four of these five ones are ghosts since then.

However, when the JBD process is performed with full reorthogonalization, the
convergence of the Ritz values changes and becomes regular,
as Figure~\ref{fig5}(b) and Figure~\ref{fig5}(d) indicate. In the right panels,
the convergence behavior is much simpler and is in accordance with
theoretical analysis in exact arithmetic. It is clear
that a simple generalized singular value is approximated by Ritz
values without ghosts. We also observe from
both the panels that the extreme Ritz values converge more quickly than
relatively interior Ritz values, that is, the Ritz values closer to
the right-most or left-most generalized singular values stabilize more early,
which confirms the theory that the JBD method generally favors the extreme
generalized singular values.

\begin{figure}[htp]
	\begin{minipage}{0.48\linewidth}
		\centerline{\includegraphics[width=6.0cm,height=4cm]{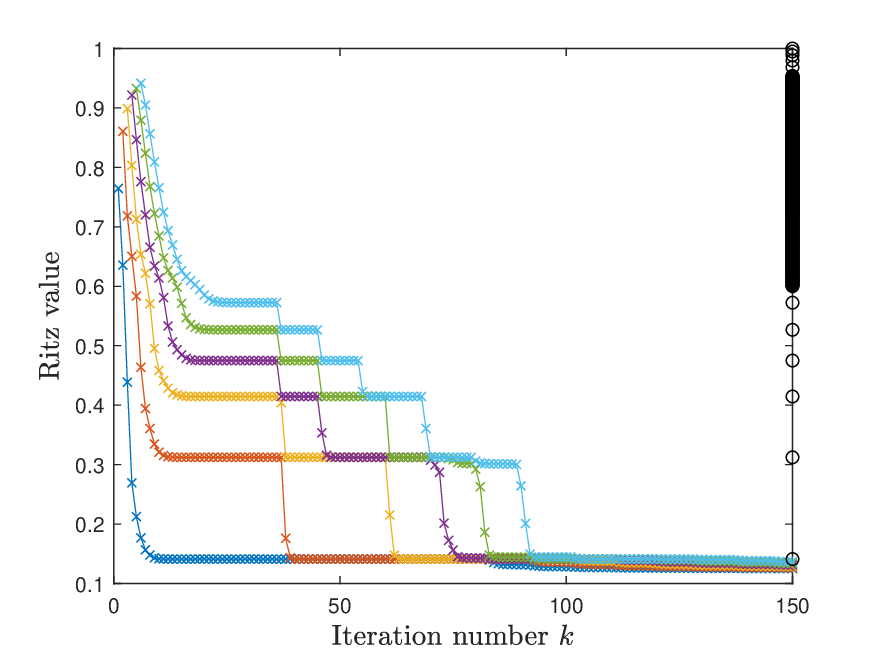}}
		\centerline{(a)}
	\end{minipage}
	\hfill
	\begin{minipage}{0.48\linewidth}
		\centerline{\includegraphics[width=6.0cm,height=4cm]{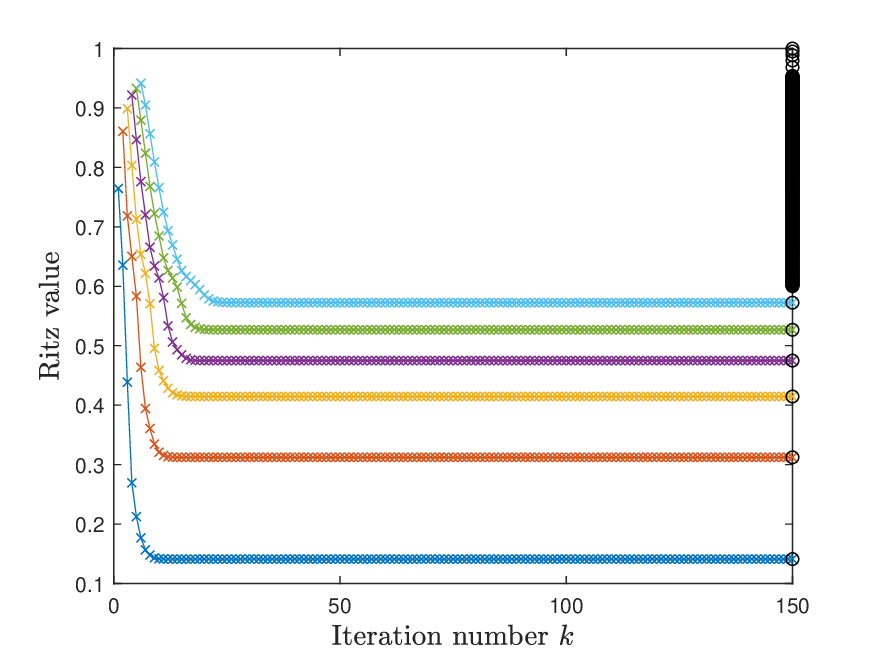}}
		\centerline{(b)}
	\end{minipage}
	\caption{Convergence of Ritz values computed by the SVD of $\bar{B}_{k}$:
(a) {the first six smallest Ritz values without reorthogonalization}; (b)
{the first six smallest Ritz values with full reorthogonalization}.}
	\label{fig6}
\end{figure}

Figure \ref{fig6} depicts the convergence processes
of the first six smallest Ritz values
computed by the SVD of $\bar{B}_{k}$, which corresponds to the first
largest generalized singular values of $\{A,L\}$. The convergence behavior
of the largest Ritz values by the SVD of $\bar{B}_k$
is similar and thus omitted. The right
vertical line indicates the values of $s_{i}$ for $i=1,\dots, 500$.
From Figure \ref{fig6}(a), we observe the ``ghost" phenomenon that some converged
Ritz values suddenly jump and then converge to the next small singular
values after several iterations. More detailed convergence phenomena are
similar to Figure~\ref{fig5}(a) and Figure~\ref{fig5}(c).
Figure \ref{fig6}(b) shows the convergence of Ritz values with full
reorthogonalization, from which it is clear that the JBD method
converges regularly and there are no spurious copies.
Figure \ref{fig6}(a)--Figure \ref{fig6}(b) demonstrate
that the JBD method favors the extreme generalized singular values.

%%%%%%%%%%%%%%%%%%%%%%%%%%%%%%%%%%%%%%%%%%%%%%%%%%%%%%%%%%%%%%%%%%%
\textbf{Example 2.}  We investigate the convergence of
the approximate generalized singular values and vectors of $\{A,L\}$,
which are computed by using both the SVDs of $B_{k}$ and $\bar{B}_{k}$
and the GSVD of $\{B_{k},\bar{B}_{k}\}$.
We test two matrix pairs. The first
pair is the problem in Example 1, and the second pair $\{dw256A, dw256B\}$
is an electromagnetic problem with $m=n=p=512$ from
the non-Hermitian Eigenvalue Problem Collection in the Matrix
Market\footnote{https://math.nist.gov/MatrixMarket}, where $\kappa(A)=11490.4$
and $\kappa(L)=3.7328$.

We use the JBD method with full reorthogonalization to compute the
largest generalized singular value and vectors. Instead of the
SVD of the individual $B_k$ or
$\bar{B}_k$, we compute the SVDs of $B_k$ and $\bar{B}_k$ simultaneously,
and take $\{c_{1}^{(k)}, \bar{s}_{1}^{(k)}\}$ to approximate
$\{c_{1},s_{1}\}$, where $c_{1}^{(k)}$ is the largest singular
value of $B_{k}$ and $\bar{s}_{1}^{(k)}$ is the smallest singular
value of $\bar{B}_{k}$. The approximations to the right and
left generalized singular vectors $x_1$ and $p_1^A$ are computed from the SVD of
$B_{k}$, and the approximation to the left generalized singular
vectors $p_1^L$ is computed from the SVD of $\bar{B}_{k}$.
Alternatively, we also compute the largest GSVD components
of these two matrix pairs using the GSVD of $\{B_k,\bar{B}_k\}$ and
obtain the approximation $\{c_1^{(k)},s_1^{(k)}\}$ to $\{c_1,s_1\}$
and the approximations $x_1^{(k)},y_1^{(k)},z_1^{(k)}$ to
$x_1,p_1^A,p_1^L$.

We use the angle error
$$ \sin\theta_{k} = |\bar{s}_{1}^{(k)}c_{1}-s_{1}c_{1}^{(k)}|
\mbox{ or } |s_{1}^{(k)}c_{1}-s_{1}c_{1}^{(k)}|$$
to measure the error between $\{c_{1}^{(k)}, \bar{s}_{1}^{(k)}\}$
or $\{c_{1}^{(k)}, s_{1}^{(k)}\}$
and $\{c_{1},s_{1}\}$ \cite{Sun1983}, where $\theta_k$ denotes
the angle between the vectors
$(c_1,s_1)^T$ and $(c_1^{(k)},\bar{s}_1^{(k)})^T$
or $(c_{1}^{(k)},s_{1}^{(k)})^T$.  For the corresponding
generalized singular vectors, we measure the errors
$$\sin\angle(x_{1}, x_{1}^{(k)}), \ \  \sin\angle(p_{1}^{A}, y_{1}^{(k)}), \ \
\sin\angle(p_{1}^{L}, z_{1}^{(k)}).
$$

\begin{figure}[htp]
	\begin{minipage}{0.48\linewidth}
		\centerline{\includegraphics[width=6.0cm,height=4cm]{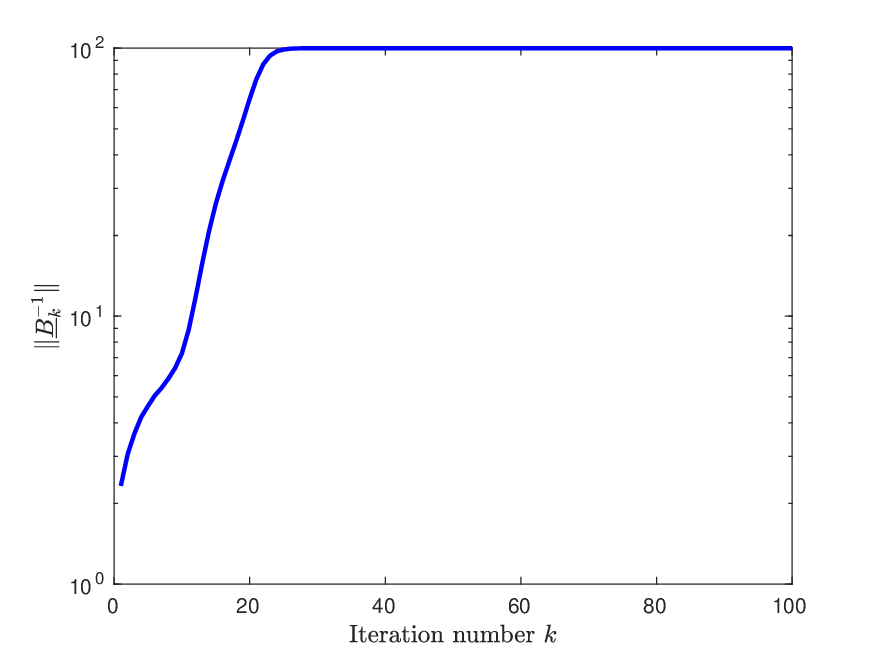}}
		\centerline{(a)}
	\end{minipage}
	\hfill
	\begin{minipage}{0.48\linewidth}
		\centerline{\includegraphics[width=6.0cm,height=4cm]{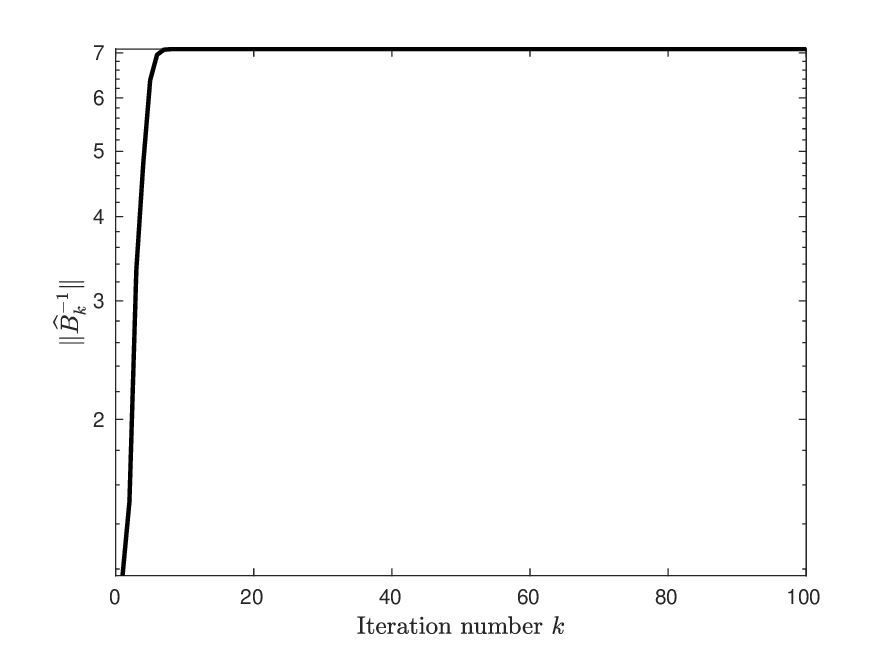}}
		\centerline{(b)}
	\end{minipage}
	
	\vfill
	\begin{minipage}{0.48\linewidth}
		\centerline{\includegraphics[width=6.0cm,height=4cm]{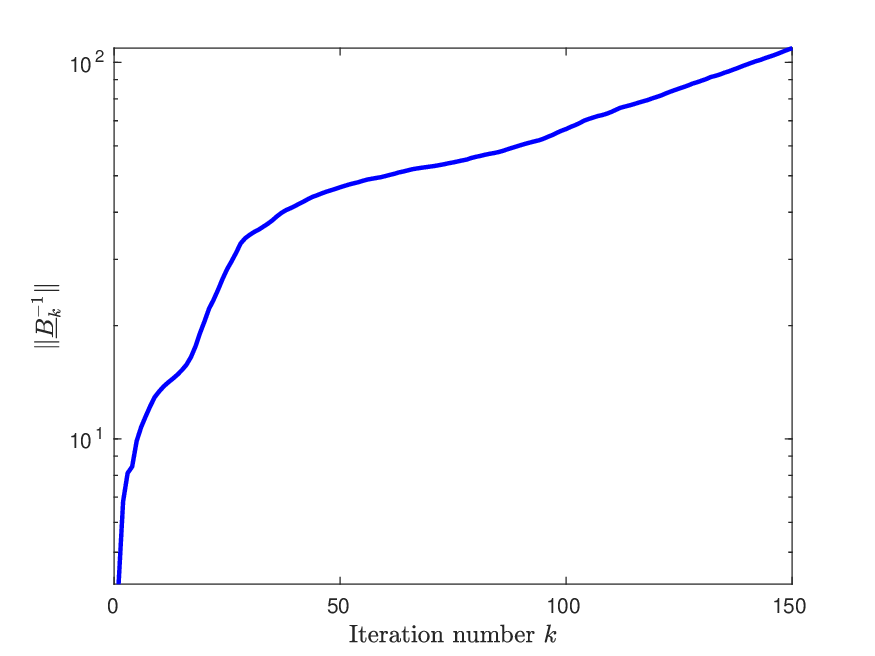}}
		\centerline{(c)}
	\end{minipage}
	\hfill
	\begin{minipage}{0.48\linewidth}
		\centerline{\includegraphics[width=6.0cm,height=4cm]{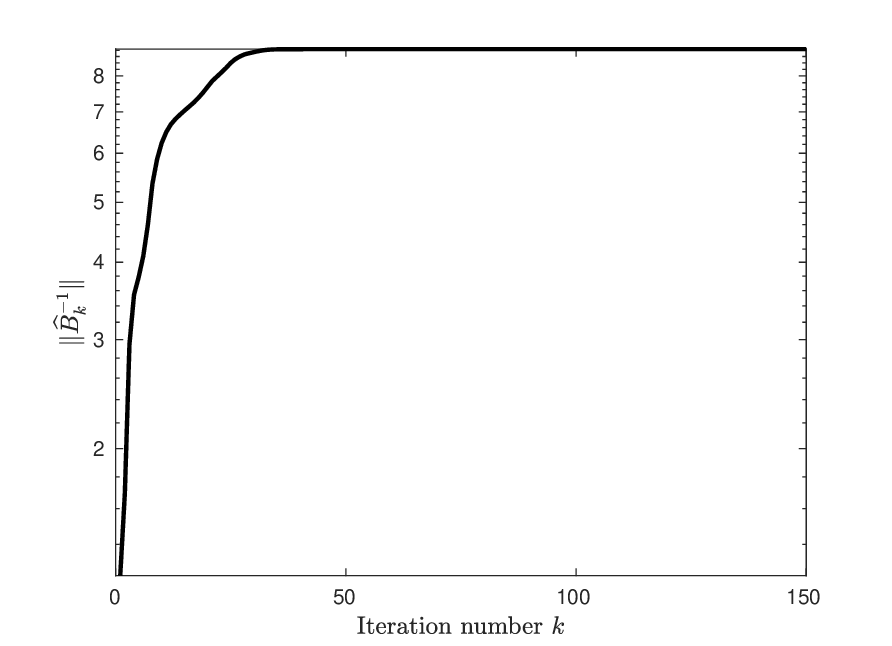}}
		\centerline{(d)}
	\end{minipage}
	\caption{ Growth of $\|\underline{B}_{k}^{-1}\|$ and $\|\widehat{B}_{k}^{-1}\|$:
(a),(b) {\sf \{$A_{500}$, $L_{500}$\}} in Example 1; (c),(d) {\sf \{$dw256A,256B$\}} in Example 2.}
	\label{fig7}
\end{figure}

\begin{figure}[h]
	\begin{minipage}{0.48\linewidth}
		\centerline{\includegraphics[width=6.0cm,height=4cm]{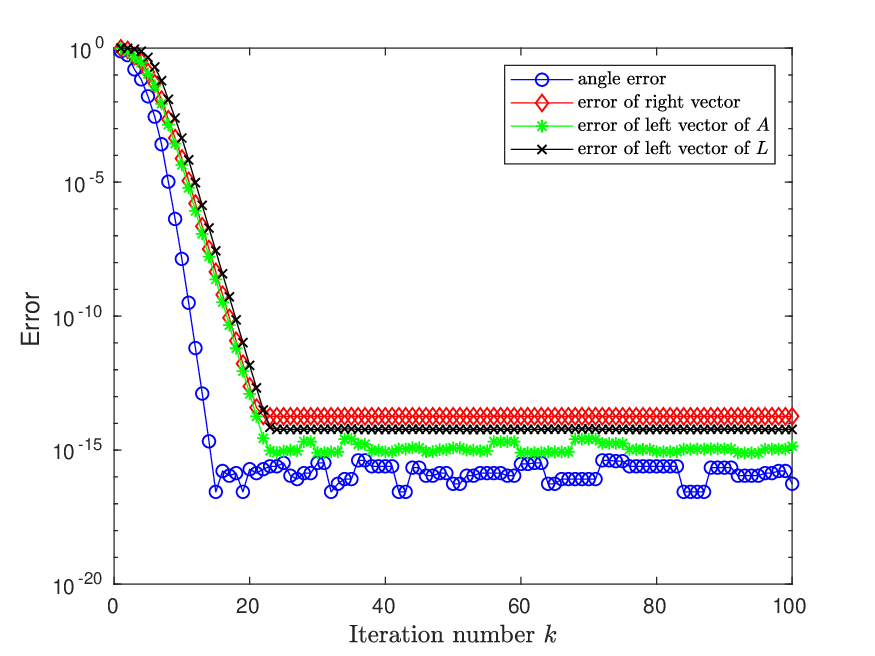}}
		\centerline{(a)}
	\end{minipage}
	\hfill
	\begin{minipage}{0.48\linewidth}
		\centerline{\includegraphics[width=6.0cm,height=4cm]{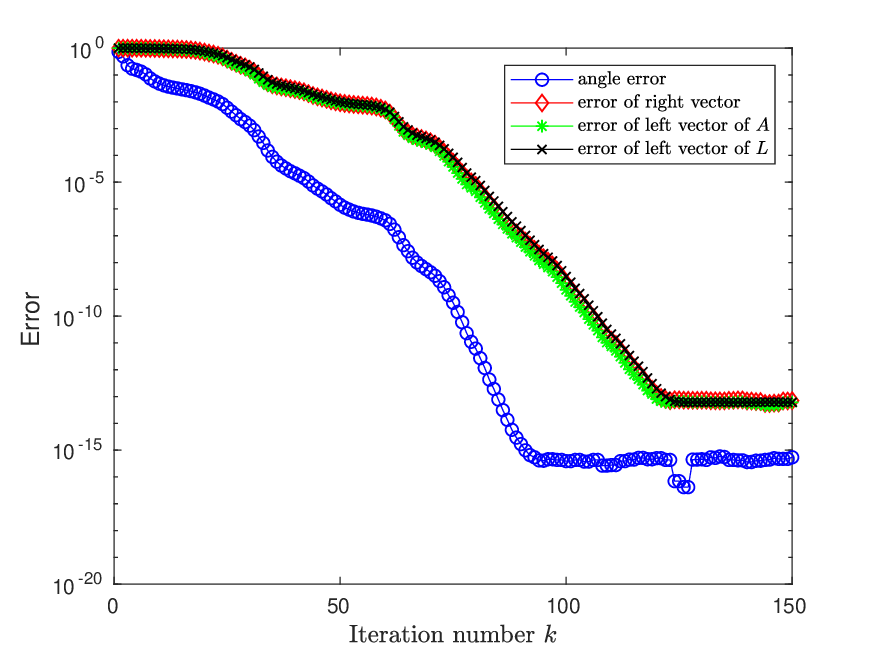}}
		\centerline{(b)}
	\end{minipage}
	\caption{Convergence processes of the approximate GSVD components: (a)
{\sf \{$A_{500}$, $L_{500}$\}} in Example 1; (b) {\sf \{$dw256A,256B$\}} in Example 2.}
	\label{fig8}
\end{figure}

Figure \ref{fig8} draws the approximation processes of the approximate generalized
singular values and vectors obtained by the SVDs of $B_k$ and $\bar{B}_k$
as $k$ increases, while Figure \ref{fig7}
depicts the growths of $\|\underline{B}_{k}^{-1}\|$ and $\|\widehat{B}_{k}^{-1}\|$.
We have found that, for these two matrix pairs, $\|\underline{B}_{k}^{-1}\|$
and $\|\widehat{B}_{k}^{-1}\|$ grow quite slowly and are very modest for Example 1 when
$k=1\sim 150$ and for Example 2 when $k=1\sim 150$, respectively.
The approximate GSVD components converge regularly,
the JBD method converges fast, and all the errors achieve
the level of $\epsilon$ after twenty iterations for Example 1.

Figure~\ref{fig10} depicts the convergence processes
of approximate GSVD components computed by the GSVD of $\{B_k,\bar{B}_k\}$. In this
figure, we also draw the curves of residual norms.
Clearly, the computed results are very similar to those obtained by the SVDs
of $B_k$ and $\bar{B}_k$ until the errors reach the level of $\epsilon$.

\begin{figure}[h]
	\begin{minipage}{0.48\linewidth}
		\centerline{\includegraphics[width=6.0cm,height=4cm]{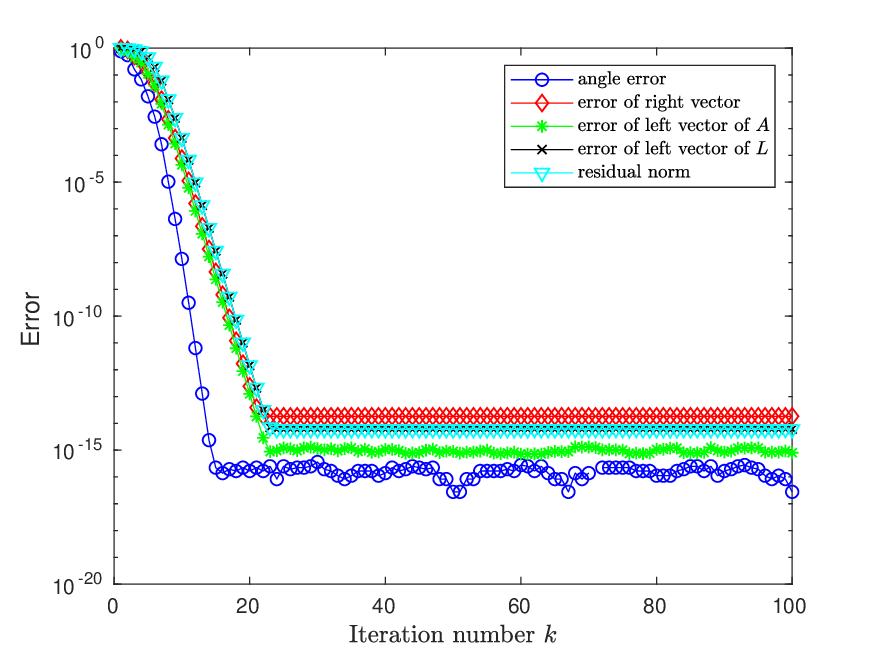}}
		\centerline{(a)}
	\end{minipage}
	\hfill
	\begin{minipage}{0.48\linewidth}
		\centerline{\includegraphics[width=6.0cm,height=4cm]{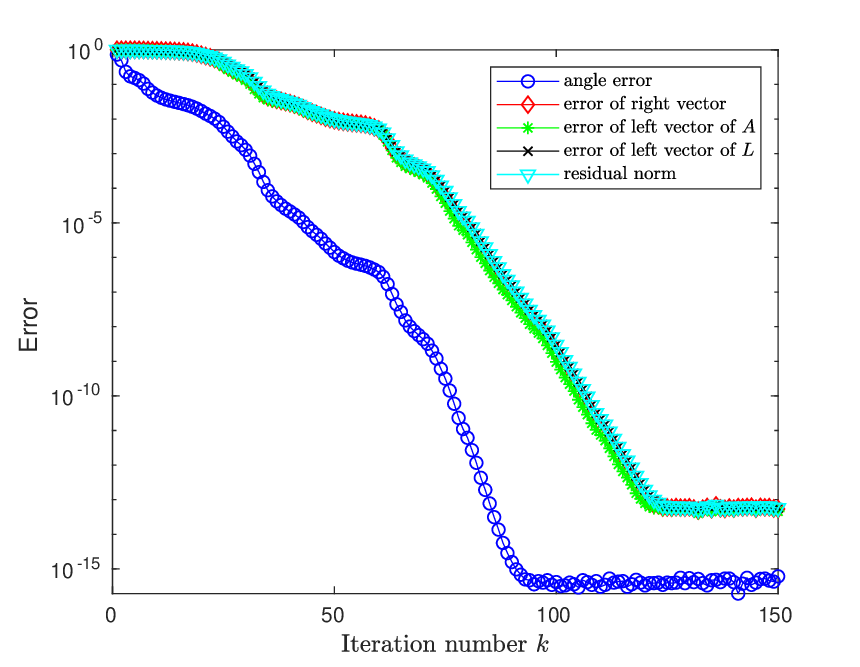}}
		\centerline{(b)}
	\end{minipage}
	\caption{Convergence processes of the approximate GSVD components based on
the GSVD of  $\{B_k,\bar{B}_k\}$: (a)
{\sf \{$A_{500}$, $L_{500}$\}} in Example 1; (b) {\sf \{dw256A,dw256B\}}.}
	\label{fig10}
\end{figure}

%%%%%%%%%%%%%%%%%%%%%%%%%%%%%%%%%%%%%%%%%%%%%%%%%%%%%%%%%%%%%%%%%%%
\textbf{Example 3.} We show the residual norm and
its upper bound \eqref{4.7}.
The matrix pair $\{A,L\}$ is chosen to be
$\{A_{800}, L_{800}\}$ in Table~\ref{tab1},
and we use the largest singular value of $B_{k}$ to compute
an approximation to the
largest generalized singular value. From the construction,
we have $\|(A_{800}^{T},L_{800}^{T})^{T}\|=1$, and the largest
generalized singular value is $\{c_{1},s_{1}\}$, where $c_{1}=0.75$ and
$s_{1}=\sqrt{1-c_{1}^{2}}$.

We also display the convergence processes of
the approximate generalized singular values by using both the angle error
and relative error
$$ |c_{1}^{(k)}/s_{1}^{(k)}-c_{1}/s_{1}|/(c_{1}/s_{1}).
$$

\begin{figure}[htp]
	\begin{minipage}{0.48\linewidth}
		\centerline{\includegraphics[width=6.0cm,height=4cm]{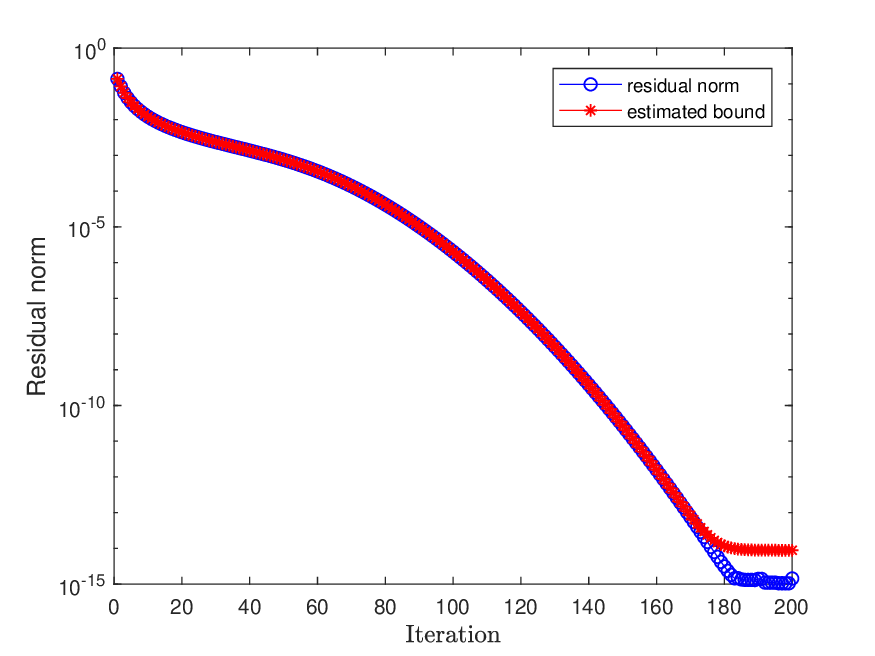}}
		\centerline{(a)}
	\end{minipage}
	\hfill
	\begin{minipage}{0.48\linewidth}
		\centerline{\includegraphics[width=6.0cm,height=4cm]{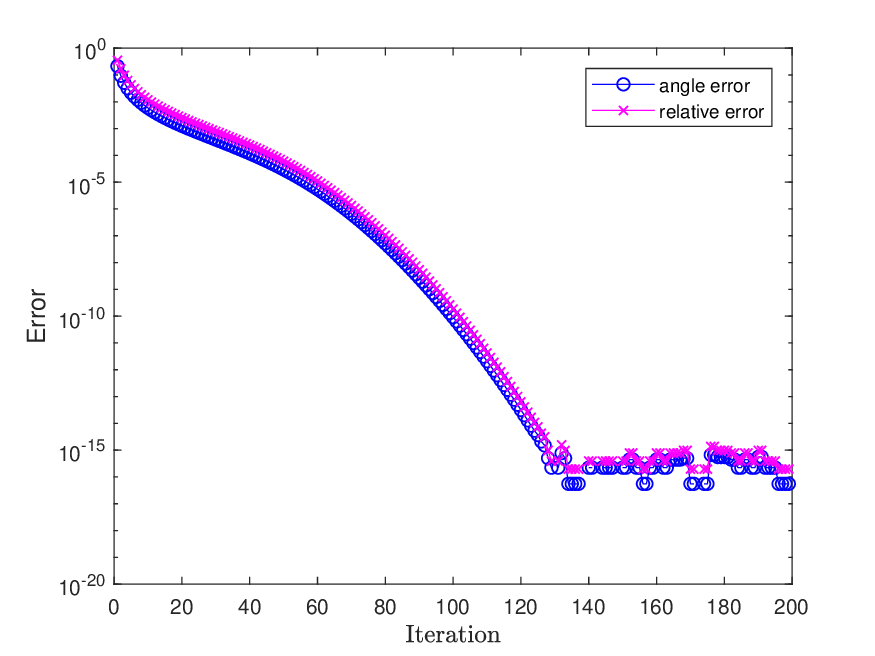}}
		\centerline{(b)}
	\end{minipage}
	\caption{ Convergence history of the approximate largest generalized
singular value of $\{A_{800}, L_{800}\}$: (a) {residual norm and its upper bound};
(b) {angle error and relative error}.}
	\label{fig9}
\end{figure}

We draw the convergence histories of the approximate largest generalized singular value
and the residual norm in Figure \ref{fig9}. From Figure~\ref{fig9}(b),
It is found that the approximate largest generalized singular value
$c_{1}^{(k)}/s_{1}^{(k)}$ converges to
$c_{1}/s_{1}$, and the relative error curve shows that the
approximation accuracy of $c_{i}^{(k)}/s_{i}^{(k)}$ to
$c_{i}/s_{i}$ reaches $O(\epsilon)$. We observe from Figure~\ref{fig9}(a)
that the residual norm and its upper bound are almost the same as $k$
increases. The true residual norm decays until the level of
$\epsilon$, but the estimated upper bound stagnates at the level that
is a little bit higher than $\epsilon$, since the upper bound for
$\|r_{i}^{(k)}\|$ has a term $O(\|\underline{B}_{k}^{-1}\|\epsilon)$,
which is considerably bigger than $\epsilon$ when $\|\underline{B}_{k}^{-1}\|>1$
considerably. For the case
that $\|\underline{B}_{k}^{-1}\|$ remains modest, the
term $O(\|\underline{B}_{k}^{-1}\|\epsilon)$ plays no role in the
upper bound until the bound reaches $O(\epsilon)$. Therefore, the upper bound
$\|R\|\alpha_{k+1}\beta_{k+1}|e_{k}^{T}w_{i}^{(k)}|$ can be used
as a reliable stopping criterion for the JBD algorithm. We have
seen that the angle errors and relative errors resemble very much,
as is expected since $c_1/s_1=O(1)$. We point out
that, in large matrix computations,
a (relative) stopping tolerance is usually $O(\epsilon^{1/2})$. Therefore,
provided
that $\|\underline{B}_{k}^{-1}\|\leq O(\epsilon^{-1/2})$, our upper bound
is a very reliable estimate for $\|r_i^{(k)}\|$.

%%%%%%%%%%%%%%%%%%%%%%%%%%%%%%%%%%%%%%%%%%%%%%%%%%%%%
\section{Conclusions and future work}\label{sec6}

We have made a numerical analysis of the JBD process
on $\{A,L\}$ in finite precision, and
established relationships between it and
respective lower and upper Lanczos bidiagonalizations of
$Q_{A}$ and $Q_{L}$ in the presence of round-offs. The results
have shown that the $k$-step JBD process for computing $U_{k+1}$, $V_{k}$ and $B_{k}$
is equivalent to the lower Lanczos bidiagonalization of $Q_{A}$ with
the error $\delta=O(\lVert \underline{B}_{k}^{-1}\lVert\epsilon)$,
and it for computing $\widehat{U}_{k+1}$, $V_{k}$
and $\widehat{B}_{k}$ is equivalent to the upper Lanczos bidiagonalization
of $Q_{L}$ with the error $\hat{\delta}=O((\|\underline{B}_{k}^{-1}\|+
\|\widehat{B}_{k}^{-1}\|)\epsilon)$.
We have investigated the loss of orthogonality of the computed basis vectors
and established an upper bound for the orthogonality level of $\eta(\widehat{U}_{k})$,
showing that its orthogonality level is controlled by
those of $U_{k+1}$, $\widetilde{V}_{k}$, i.e., $V_k$, and
the quantity $\|\widehat{B}_{k}^{-1}\|$.

We have shown how to use the JBD process to compute a few extreme generalized
singular values and vectors of $\{A,L\}$ and proposed a JBD method
that obtains approximate generalized singular values and vectors
using a few computational approaches, which include
three approaches to obtain
approximate generalized singular values and approximate right generalized singular
vectors by computing the SVDs of $B_k$
and $\bar{B}_k$ and the GSVD of $\{B_k,\bar{B}_k\}$ and two approaches
to obtain approximate left
generalized singular vectors of $A$ and $L$ by computing
either the left singular vectors of $B_k$ and $\bar{B}_k$
simultaneously or the left generalized singular vectors of $\{B_k,\bar{B}_k\}$.
We have considered the convergence and
accuracy of the approximate generalized singular values. The results have
indicated that the generalized singular values of $B_{k}$ and $\bar{B}_k$
are as accurate as the true Ritz values of $Q_A$ and $Q_L$ with respect to
the given subspaces within $\mathcal{O}(\epsilon)$, provided that
the basis vectors have semiorthogonality levels and $\underline{B}_k$
and $\bar{B}_k$ are not ill conditioned. Under these conditions,
it is only necessary to maintain the desired semiorthogonality in order
to obtain the approximate GSVD components with the same accuracy as those obtained by
the JBD method with full reorthogonalization. An
efficient partial reorthogonalization strategy has been proposed
in \cite{JiaLi2021} for this purpose.

In the meantime, we have established a compact
upper bound for the residual norm $\|r_{i}^{(k)}\|$
of an approximate generalized singular value and
approximate right generalized singular vector in finite precision
and shown that it can be used as a cheap and reliable stopping criterion
without explicitly computing the approximate right generalized singular vector
until the convergence occurs.
Finally, we have reported numerical experiments to
justify all the results obtained and assertions.

There remain some important issues. For instance,
due to the limitation of storage,
it is generally necessary to restart the JBD method.
A commonly used restarting technique is the implicit restarting
proposed in \cite{sorensen1992implicit} for the eigenvalue
problem and adapted to the SVD computation in
\cite{jia2003implicitly,jia2010refined,larsen2001combining}.
How to adapt the implicit restart to the JBD method and develop
efficient algorithms is very significant.
Also, notice that the residual norm \eqref{resnorm} is used to measure
the convergence of the JBD method, which is the residual norm of an
approximate generalized eigenpair $((c_i^{(k)}/s_i^{(k)})^2,x_i^{(k)})$
of $s_i^2A^TAx_i=c_i^2 L^TLx_i$ and does not take
approximate left generalized singular vectors $y_i^{(k)}$ for $A$
and $z_i^{(k)}$ for $L$ into account. A much more proper
criterion is to measure the residual norm of the approximate GSVD components
$(c_i^{(k)},s_i^{(k)},x_i^{(k)},y_i^{(k)},z_i^{(k)})$, which,
by the definition \eqref{gsvdv} of GSVD of $\{A,L\}$, is
$$
\|r_i^{(k)}\|=\sqrt{\|Ax_i^{(k)}-c_i^{(k)}y_i^{(k)}\|^2+
\|Lx_i^{(k)}-s_i^{(k)}z_i^{(k)}\|^2+\|s_i^{(k)}A^Ty_i^{(k)}-
c_i^{(k)}L^Tz_i^{(k)}\|^2}.
$$
We need to establish reliable
upper bounds for it in exact arithmetic and in finite precision so as to
design an efficient stopping criterion for the GSVD computation.

%%%%%%%%%%%%%%%%%%%%%%%%%%%%%%%%%%%%%%%%%%%%%%%%%%%%%
%\section*{Acknowledgements}

%%%%%%%%%%%%%%%%%%%%%%%%%%%%%%%%%%%%%%%%%%%%%%%%%%%%%

\end{document}